\newcommand{\kk}{\mathbb{K}}
\newcommand{\SL}{\mathop{\mathrm{SL}}}
\newtheorem{prop}{Proposition}
\newtheorem{corollary}{Corollary}
\theoremstyle{plain}
\newtheorem{theor}{Theorem}
\newtheorem*{prop*}{Proposition}
\newtheorem*{theor*}{Theorem}
\newtheorem{lem}{Lemma}
\theoremstyle{definition}
\newtheorem{de}{Definition}
\theoremstyle{remark}
\newtheorem{rem}{Remark}
\newcounter{property}
\newcounter{prooperty}
\def\keywords#1{{\def\@thefnmark{\relax}\@footnotetext{#1}}}
\begin{document}

\title[Affine toric $\SL(2)$\,-\,embeddings]
{Affine toric $\SL(2)$\,-\,embeddings}

\author[S. A. Gaifullin]{Sergey A. Gaifullin}
\address{Department of Higher Algebra, Faculty of Mechanics and Mathematics,
Moscow state University, Leninskie Gory, Moscow, GSP-2, 119992
Russia.} \email{sergaifu@mccme.ru}


\begin{abstract}

In 1973 V.L.Popov classified affine $\mathrm{SL}(2)$-embeddings.
He proved that a locally transitive $\mathrm{SL}(2)$-action on a
normal affine three-dimensional variety $X$ is uniquely
determined by a pair $(\frac{p}{q}, r)$, where $0<\frac{p}{q}\le
1$ is an uncancelled fraction and $r$ is a positive integer. Here
$r$ is the order  of the stabilizer of a generic point. In this
paper we show that the variety $X$ is toric, i.e. admits a locally
transitive action of an algebraic torus, if and only if $r$ is
divisible by $q-p$. To do this we prove the following necessary
and sufficient condition for an affine $G/H$-embedding to be
toric. Suppose $X$ is a normal affine variety, $G$ is a simply
connected semisimple algebraic group acting regularly on $X$, $H$
is a closed subgroup of $G$ such that the character group
$\mathfrak{X}(H)$ is finite and $G/H\hookrightarrow X $ is a
dense open equivariant embedding. Then $X$ is toric if and only
if there exist a quasitorus $\widehat{T}$ and a
$(G\times\widehat{T})$-module $V$ such that $X\stackrel{G}{\cong}
V/\!\!/\widehat{T}$. The key role in the proof plays D.\,Cox's
construction.
\end{abstract}

\maketitle
\section*{Introduction}
We work over an algebraically closed field $\mathbb{K}$ of
characteristic zero.

A normal variety $X$ is called {\it toric} if it admits a locally
transitive action of an algebraic torus $T$. Let us fix the action
of $T$ on $X$. The theory of toric varieties (see, for example,
\cite{ful}) assigns a rational strongly convex polyhedral cone to
an affine toric variety. This cone uniquely determines the
variety up to a $T$-equivariant isomorphism.

It follows from D.\,Cox's paper ~\cite{cox} that an affine toric
variety without non-constant invertible functions can be realized
as a categorical quotient $V/\!\!/\widehat{T}$ of a vector space
$V$ by a linear action of a quasitorus $\widehat{T}$. This vector
space is the spectrum of the {\it Cox ring}. On the other hand,
it is not difficult to show that a categorical quotient of a
vector space by a linear action of a quasitorus is toric.

Further all varieties are normal, affine and irreducible, and all
actions are regular.

Suppose $X$ is a variety, $G$ is a simply connected semisimple
group acting on $X$, and $H$ is a closed subgroup of $G$ such
that the character group $\mathfrak{X}(H)$ is finite. Let
$G/H\hookrightarrow X $ be an open equivariant embedding. The
main idea of this paper consists in the following necessary and
sufficient condition for $X$ to be toric.
\begin{prop*}
Affine G-variety $X$ with an open $G$-equivariant embedding
$G/H\hookrightarrow X$ is toric if and only if there exist a
quasitorus $\widehat{T}$ and a $(G\times\widehat{T})$-module $V$
such that $X\stackrel{G}{\cong} V/\!\!/\widehat{T}$.
\end{prop*}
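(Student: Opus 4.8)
The plan is to prove the two directions separately. The implication $(\Leftarrow)$ is the easier one: if $X\stackrel{G}{\cong} V/\!\!/\widehat{T}$ for a $(G\times\widehat{T})$-module $V$, then I would argue that $X$ is toric directly. Since $G$ is semisimple and acts on the vector space $V$ commuting with $\widehat{T}$, I would first pass to a maximal torus $T_G\subset G$ and observe that $V$, being a $G$-module, contains a generic $T_G$-orbit whose closure spans a coordinate subspace; more to the point, I would use that $V$ itself is a toric variety for the torus $(\text{maximal torus of }GL(V))$, hence $V/\!\!/\widehat{T}$ is toric for the induced action of a quotient of that big torus modulo $\widehat{T}$. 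The normality and affineness are inherited from $V$, and local transitivity of the residual torus on $V/\!\!/\widehat{T}$ follows because local transitivity of a torus on $V$ (on the open orbit, the complement of the coordinate hyperplanes) descends to the GIT quotient. I should be a little careful here that the quotient torus still acts with a dense orbit on $X$; this is where I would invoke the earlier remark in the introduction that "a categorical quotient of a vector space by a linear action of a quasitorus is toric."

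For the harder implication $(\Rightarrow)$, suppose $X$ is toric. Since $X$ is affine and normal with $G/H$ dense and $\mathfrak{X}(H)$ finite, I claim $X$ has no non-constant invertible regular functions: invertible functions on $X$ restrict to invertible functions on $G/H$, and because $G$ is semisimple (so $\mathfrak{X}(G)$ is trivial) and $\mathfrak{X}(H)$ is finite, $\mathbb{K}[G/H]^{\times}=\mathbb{K}^{\times}$, hence the same holds on $X$. Now I apply D.\,Cox's construction as recalled in the introduction: the affine toric variety $X$ without non-constant units is realized as a categorical quotient $X\cong V_0/\!\!/\widehat{T}$, where $V_0=\operatorname{Spec}$ of the Cox ring of $X$ and $\widehat{T}$ is a quasitorus. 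This already gives the module $V_0$ and the quasitorus $\widehat{T}$, but only as a statement about the torus action and the isomorphism as abstract varieties — I still need the $G$-equivariance, i.e. I need a $G$-action on $V_0$ commuting with $\widehat{T}$ that induces the given $G$-action on $X$.

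The main obstacle is exactly lifting the $G$-action on $X$ to a linear $G$-action on the Cox ring $V_0=\operatorname{Spec}(\operatorname{Cox}(X))$ commuting with $\widehat{T}$. Here is how I would do it. The Cox ring is canonically attached to $X$: it is $\bigoplus_{D}\,H^0(X,\mathcal{O}_X(D))$ summed over (a set of representatives of) the divisor class group $\operatorname{Cl}(X)$, with the quasitorus $\widehat{T}=\operatorname{Hom}(\operatorname{Cl}(X),\mathbb{K}^{\times})$ grading it. Because $G$ is connected, it acts trivially on $\operatorname{Cl}(X)$, so $G$ permutes the graded pieces into themselves; since $G$ is simply connected and semisimple (hence has no non-trivial characters and $\operatorname{Pic}(G)$ is trivial), the standard linearization results (Knop–Kraft–Vust / Popov–Vinberg) guarantee that the $G$-action on $X$ lifts to a $G$-action on the total space of each reflexive sheaf $\mathcal{O}_X(D)$, compatibly, hence to a $G$-action on $\operatorname{Cox}(X)$ by graded algebra automorphisms — this action automatically commutes with $\widehat{T}$. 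Finally, $\operatorname{Cox}(X)$ is a finitely generated (since $X$ is toric), normal, factorial algebra with only constant units, on which $G$ acts; choosing a $G$-stable, $\widehat{T}$-stable finite-dimensional generating subspace realizes $V_0=\operatorname{Spec}(\operatorname{Cox}(X))$ as a $(G\times\widehat{T})$-stable closed subvariety of a $(G\times\widehat{T})$-module. The last point to check is that $V_0$ is actually the whole module, not a proper subvariety: this follows because $\operatorname{Cox}(X)$ of a toric variety is a polynomial ring, so $V_0\cong\mathbb{A}^{N}$, and a faithful-on-a-generating-space representation of $G\times\widehat{T}$ on it makes $V_0$ itself the module $V$. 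Then $X\stackrel{G}{\cong}V/\!\!/\widehat{T}$ as required.
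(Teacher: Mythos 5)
Your overall outline matches the paper's: realize $X$ as $\mathrm{Spec}(\mathrm{Cox}(X))/\!\!/\widehat{T}$, lift the $G$-action to the Cox ring, and use that the Cox ring is a polynomial ring; the converse direction via the diagonal torus $\overline{T}\supset\widehat{T}$ and the induced $\overline{T}/\widehat{T}$-action on $V/\!\!/\widehat{T}$ is exactly the paper's argument. However, your final step in the forward direction has a genuine gap. Knowing that $V_0=\mathrm{Spec}(\mathrm{Cox}(X))\cong\mathbb{A}^N$ carries a $(G\times\widehat{T})$-action does \emph{not} yet give a $(G\times\widehat{T})$-\emph{module}: you must show the action is linearizable. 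Your argument via a ``faithful-on-a-generating-space representation'' does not do this. A $G$-stable finite-dimensional generating subspace of $\mathrm{Cox}(X)$ exists, but its dimension may exceed $N$: the linear span of the Cox variables $x_\rho$ need not be $G$-stable, since the graded piece of degree $[D_\rho]$ is all of $H^0(X,D_\rho)$ and $G$ can move $x_\rho$ off the span of the variables. An equivariant closed embedding of $\mathbb{A}^N$ into a larger module does not linearize the action on $\mathbb{A}^N$ (this is the linearization problem, which is open in general). The paper closes this gap by first checking that $S(X)^{G\times\widehat{T}}=\mathbb{K}[X]^G=\mathbb{K}$ (using the open $G$-orbit) and then invoking the Kraft--Popov theorem: a reductive group action on $\mathbb{K}^u$ whose only invariants are constants is equivalent to a linear action. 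Without this (or an equivalent substitute) your proof does not produce the module $V$.

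A second, softer issue is the lifting step itself, which you correctly identify as the main obstacle but then dispatch with a reference to linearization of reflexive sheaves. Since $\mathfrak{X}(H)$ is finite but possibly nontrivial, $\mathrm{Cl}(X)$ has torsion, so the relevant divisors $W_j$ are Weil but not Cartier and the sheaves $\mathcal{O}_X(W_j)$ are only reflexive of rank one; moreover one must lift the action compatibly with the \emph{ring} structure on $\bigoplus_D H^0(X,D)$, whose multiplication in the presence of torsion requires the choice of trivializations $k_jW_j=(F_j)$. The paper devotes Section~\ref{podnatie} to exactly this: it defines the twisted product $f*g=fg\prod F_j^{[(b_j+d_j)/k_j]}$, constructs the action $g\bullet f=(g\cdot f)\prod(g\cdot f_j/f_j)^{\mu_j}$ using $k_j$-th roots $f_j$ of the $F_j$ pulled back to $G$ (where $\mathrm{Cl}(G)=0$), and verifies by hand that $\bullet$ is an action by graded algebra automorphisms commuting with $\widehat{T}$. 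Your citation covers the free part of $\mathrm{Cl}(X)$ reasonably well, but the torsion part is where the real work lies and is not addressed.
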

The proof is based on the construction of $(G\times
\widehat{T})$\,-\,action on the spectrum of the Cox ring. It
follows from~\cite{kp} that this action is linear.

In Section \ref{class} we investigate three-dimensional varieties
with a locally transitive $\mathrm{SL}(2)$-action. Recall that a
(normal affine irreducible) three-dimensional variety $X$ with
locally transitive and locally free $\mathrm{SL}(2)$-action is
called an {\it affine $\mathrm{SL}(2)$\,-\,embedding}. Affine
$\mathrm{SL}(2)$\,-\,embeddings are studied in~\cite{p1} (see
also~\cite[Ch.III, $\S$~4]{kr}). In~\cite{p1} it is also proved
that if an $\mathrm{SL}(2)$\,-\,action on a three-dimensional
variety $X$ is locally transitive but not transitive, then the
stabilizer of a generic point is a cyclic group. By $r$ we denote
the order of this group. Then the variety $X$ is called an {\it
affine $\mathrm{SL}(2)/\mathbb{Z}_r$\,-\,embedding}. Let $U$ be a
maximal unipotent subgroup of $\mathrm{SL}(2)$. The algebra of
$U$-invariants of the action is a monomial subalgebra of a
polynomial algebra in two variables. This subalgebra determines a
rational number $0<\frac{p}{q}\le 1$, which is called the {\it
height} of the action. We shall assume that $p$ and $q$ are
relatively prime. One of the main results of V.L.Popov's theory
asserts that affine $\mathrm{SL}(2)/\mathbb{Z}_r$\,-\,embeddings
considered up to an $\mathrm{SL}(2)$\,-\,equivariant isomorphism
are in one-to-one correspondence with pairs $(\frac{p}{q}, r)$,
$0<\frac{p}{q}\le 1$, $r\in\mathbb{N}$. In this paper we find all
pairs $(\frac{p}{q}, r)$ such that the corresponding variety $X$
is toric.

In Section \ref{class} we use the previous proposition for
$G=\mathrm{SL}(2)$, $H=\mathbb{Z}_r$. Further we find out when the
$\mathrm{SL}(2)$\,-\,action on $V/\!\!/ \widehat{T}$ is locally
transitive and determine the corresponding pairs $(\frac pq,r)$.
Finally we obtain the following theorem.
\begin{theor*}
Suppose $X$ is a three-dimensional normal affine irreducible
variety with a regular locally transitive
$\mathrm{SL}(2)$\,-\,action. Then $X$ is toric if and only if it
is an $\mathrm{SL}(2)/\mathbb{Z}_r$\,-\,embedding with height
$\frac pq$, $(p,q)=1$, such that $r$ is divisible by $q-p.$
\end{theor*}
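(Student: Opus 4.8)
The plan is to reduce everything to the Proposition and then to a finite classification problem. By the Proposition, applied with $G=\mathrm{SL}(2)$ and $H=\mathbb Z_r$ (for which $\mathfrak X(H)$ is finite), an affine $\mathrm{SL}(2)/\mathbb Z_r$-embedding $X$ is toric if and only if $X\stackrel{\mathrm{SL}(2)}{\cong}V/\!\!/\widehat T$ for some quasitorus $\widehat T$ and some $(\mathrm{SL}(2)\times\widehat T)$-module $V$. So I would go through all pairs $(V,\widehat T)$, keep those for which $V/\!\!/\widehat T$ is a three-dimensional normal affine variety carrying a locally transitive and locally free $\mathrm{SL}(2)$-action, and for each such one compute the Popov parameters $(\tfrac pq,r)$.

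I would first determine the possible $V$. Decompose $V=\bigoplus_{i=1}^{k}R_{n_i}$ into irreducible $\mathrm{SL}(2)$-modules ($\dim R_n=n+1$); since $\widehat T$ centralizes $\mathrm{SL}(2)$ in $\mathrm{GL}(V)$, after a conjugation it acts on the $i$-th summand through a single character $\chi_i$. For generic $v=\sum v_i$ with all $v_i\ne 0$ one has $\dim(V/\!\!/\widehat T)=\dim V-\rk\langle\chi_1,\dots,\chi_k\rangle$, so $\dim X=3$ forces $\rk\langle\chi_i\rangle=\dim V-3=k+\sum_i n_i-3$; as there are only $k$ characters, this forces $\sum_i n_i\le 3$. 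Requiring the generic $\mathrm{SL}(2)$-orbit to be three-dimensional means $\bigcap_i\mathrm{SL}(2)_{v_i}$ is finite, and together with $\sum_i n_i\le 3$ this leaves exactly the four shapes $R_1\oplus R_1$, $R_1\oplus R_2$, $R_3$, $R_1^{\oplus 3}$, each possibly augmented by trivial summands $R_0^{\oplus s}$. In the last three of these the number $k$ of characters equals $\rk\langle\chi_i\rangle$, so the $\chi_i$ are $\mathbb Q$-independent, whence $\mathbb K[V]^{\widehat T}=\mathbb K$ and $X$ would be a point — contradiction. Hence $V\cong R_1\oplus R_1\oplus R_0^{\oplus s}$, $s\ge 0$, with $\widehat T$ faithful of rank $s+1$; the conditions $\mathbb K[X]^{\mathrm{SL}(2)}=\mathbb K$ and finiteness of $\bigcap_i\mathrm{SL}(2)_{v_i}$ translate into explicit rank and positivity conditions on the weights of $\widehat T$.

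Next, for such $V$ I would read off both parameters from the weight data. Let $y_1,y_2$ be the lowest-weight coordinates on the two copies of $R_1$, $w$ their $\mathrm{SL}(2)$-invariant pairing, and $c_1,\dots,c_s$ the coordinates on $R_0^{\oplus s}$. Then $\mathbb K[X]^U=\mathbb K[y_1,y_2,w,c_1,\dots,c_s]^{\widehat T}$ is the semigroup algebra of the monoid of exponents $(a_1,a_2,b,e_1,\dots,e_s)\ge 0$ with $(a_1+b)\chi_1+(a_2+b)\chi_2+\sum_j e_j\chi_{2+j}=0$, graded by the $\mathrm{SL}(2)$-torus; and $r$ is the order of the finite group of matrices $\mathrm{diag}(\chi_1(t),\chi_2(t))\in\mathrm{SL}(2)$ with $t\in\widehat T$ fixing all the $c_j$. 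The height $\tfrac pq$ is obtained by matching this graded monoid with Popov's normal form for the $U$-invariant algebra of an $\mathrm{SL}(2)/\mathbb Z_r$-embedding and reading $p,q$ off the two edges of the corresponding two-dimensional cone. The outcome one should get is that $q-p$ is always a divisor of $r$: its value is forced by the free ($\mathbb Z$-) parts of $\chi_1,\chi_2$, while the quotient $r/(q-p)$ is controlled by the torsion of $\widehat T$, equivalently by the number $s$ of trivial summands and their weights.

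Finally I would check completeness: given any $(\tfrac pq,r)$ with $(p,q)=1$, $0<\tfrac pq\le 1$ and $(q-p)\mid r$, exhibit an admissible $(V,\widehat T)$ realizing it. Take $V=R_1\oplus R_1$; letting the one-dimensional torus part of $\widehat T$ act with weights essentially $(q,-p)$ forces the height to be $\tfrac pq$ and the generic stabilizer to have order $q-p$, and adjoining to $\widehat T$ a finite cyclic factor of the appropriate order (or, equivalently, suitable trivial summands) multiplies the stabilizer up to $r$ while leaving the height unchanged. Together with the "only if" from the previous paragraph, this yields the theorem. The step I expect to be genuinely delicate is the computation in that paragraph: setting up the exact dictionary between the weights of $\widehat T$ and Popov's $(\tfrac pq,r)$, in particular tracking the lattice index (which records the torsion of the divisor class group of $X$) when $\widehat T$ has a nontrivial finite part, and then verifying that this dictionary surjects onto the whole set $\{(q-p)\mid r\}$ and not merely onto $\{r=q-p\}$.
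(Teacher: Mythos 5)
Your overall strategy --- apply the criterion of Theorem~\ref{GH} with $G=\mathrm{SL}(2)$, $H=\mathbb{Z}_r$ and then classify the admissible pairs $(V,\widehat{T})$ --- is the same as the paper's, but two steps have genuine gaps. The first is the formula $\dim(V/\!\!/\widehat T)=\dim V-\mathrm{rk}\,\langle\chi_1,\dots,\chi_k\rangle$: the right-hand side is the codimension of a generic $\widehat T$-orbit and only bounds $\dim(V/\!\!/\widehat T)$ from above (take all weights positive --- the quotient is a point). Your own case analysis exposes this: in the cases $R_3$, $R_1\oplus R_2$, $R_1^{\oplus 3}$ you derive $\mathbb{K}[V]^{\widehat T}=\mathbb{K}$, contradicting the very formula you used to isolate them. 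Hence the bound $\sum_i n_i\le 3$ is not established and the enumeration of possible $V$ is incomplete as written. (It is repairable: first discard the weight spaces not occurring in any nonnegative relation among the $\chi_i$, after which equality does hold --- but this must be stated and proved.) The paper sidesteps the issue entirely by taking for $V$ the spectrum of the generalized Cox ring, whose dimension is the number of rays of the fan, namely $\dim X+\mathrm{rk}\,\mathrm{Cl}(X)=3+1=4$, because the boundary of the open orbit is a single prime divisor and $\mathrm{Cl}(X)\cong\mathbb{Z}\oplus\mathbb{Z}_l$; by the same count $\widehat T=\widetilde T\times\mathbb{Z}_l$ with $\widetilde T$ one-dimensional. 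This kills at once the trivial summands $R_0^{\oplus s}$ and the rank-$(s{+}1)$ quasitori that your version must carry through the whole computation, leaving only $V=R_1\oplus R_1$ with opposite-sign weights.

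The second and more serious gap is that the heart of the argument --- the dictionary translating the weights of $\widehat T$ into Popov's invariants $(\tfrac pq,r)$ and the verification that its image is exactly $\bigl\{(\tfrac pq,r):(q-p)\mid r\bigr\}$ --- is only described, not carried out; you flag it yourself as the delicate step. The paper does it explicitly: it identifies $V/\!\!/\widetilde T$ with $Y\subset V_1^{\otimes q}\otimes V_2^{\otimes p}$, computes the generic stabilizer to be $\mathbb{Z}_{q-p}$ (using $(p,q)=1$), pulls $\mathbb{K}[Y]$ back to $\mathbb{K}[\mathrm{SL}(2)]$ along $g\mapsto g\cdot u$ to get the monomials $\alpha^m\gamma^{q-m}\beta^l\delta^{p-l}$, and proves the height is $p/q$ by the substitution $\gamma=0$, $\delta=1/\alpha$, which bounds $w/t$ by $p/q$ for every $U$-invariant monomial $\alpha^t\beta^w$ in the image; for sufficiency it exhibits the torus action on $Y/\!\!/\mathbb{Z}_l$ by checking that the residual $\mathbb{Z}_l$-action commutes with $T=\overline{T}/\widetilde{T}$. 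Without some version of these computations your text is a plan rather than a proof. You also omit the small but necessary preliminary that a transitive $\mathrm{SL}(2)$-action on a positive-dimensional toric variety is impossible, which is needed before one may assume that a toric $X$ is an $\mathrm{SL}(2)/\mathbb{Z}_r$-embedding at all.
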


In Section \ref{cone} the cone of the
$\mathrm{SL}(2)/\mathbb{Z}_{j(q-p)}$\,-\,embedding regarded as a
toric variety is calculated.

After this paper had already been written, D.I.Panyushev informed
the author that the easy part of this theorem claiming that
$\mathrm{SL}(2)$\,-\,embeddings with $r$ divisible by $q-p$ are
toric can be deduced from his results \cite{pan}.

The author is grateful to his supervisor I.V.Arzhantsev for
posing the problem and permanent support.

\

N o t a t i o n

\

$\SL(2)$ is the group of matrices with determinant 1 over the
field $\kk$;

$U=\left\{\begin{pmatrix}1&&0\\ a&&1\end{pmatrix},a\in
\kk\right\}$;

$\mathfrak{X} (G)$ is the group of characters of a group $G$;

$\mathbb{Z}_r$ is the cyclic group of order $r$;

$A^\times$ is the set of invertible elements of an algebra $A$;

$\mathrm{Spec}(A)$ is the spectrum of a finitely generated
algebra $A$;

$\kk[X]$ is the algebra of regular functions on a variety $X$;

$\kk(X)$ is the field of rational functions on a variety $X$;

$(f)$ is a principal divisor of a function $f\in\kk(X)$;

$X/\!\!/G$ is the categorical quotient of a variety $X$ by an
action of a group $G$;

$Cl(X)$ is the divisor class group of a variety $X$.

\section{Preliminary information on toric varieties}
\label{tor} Let $X$ be an (affine) toric variety with a torus $T$
acting on it. By $M$ denote the lattice $\mathfrak{X}(T)$. Let
$N=\mathrm{Hom}\,(M,\mathbb{Z})$ be the dual lattice, with the
pairing between $M$ and $N$ denoted by $\langle\,,\rangle.$ Then N
may be identified with the lattice of one-parameter subgroups of
$T$. Each $m\in M$ gives a character $\chi^m\colon T\rightarrow
\mathbb{K}^*$. The variety $X$ corresponds to a strongly convex
polyhedral cone $\sigma$ in
$N_{\mathbb{Q}}=N\otimes_{\mathbb{Z}}\mathbb{Q}$. Suppose
$\widehat{\sigma}$ is the dual cone in
$M_{\mathbb{Q}}=M\otimes_{\mathbb{Z}}\mathbb{Q}$. Then
$\mathbb{K}[X]$ is isomorphic to the semigroup algebra
$\mathbb{K}[M\cap \widehat{\sigma}]$.

Let $\Delta(1)$ be the set of one-dimensional faces of $\sigma$.
It follows from \cite[Ch.3,~$\S$~3]{ful} that each one-dimensional
face $\rho\in\Delta(1)$ corresponds to a prime $T$-invariant
divisor $D_{\rho}$ in $X$ and each prime $T$-invariant divisor in
$X$ coincides with $D_{\rho}$ for some $\rho\in \Delta(1)$.

The following lemma can be found in \cite[Ch.3,~$\S$~4]{ful}.

\begin{lem}
Any divisor on $X$ is linearly equivalent to a $T$-invariant
divisor.
\end{lem}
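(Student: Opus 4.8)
The plan is to use that the torus $T$ sits inside $X$ as the dense open orbit and that its complement is, up to codimension $\ge 2$, the union of the $T$-invariant prime divisors $D_\rho$.

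First I would note that $\kk[X]=\kk[M\cap\widehat{\sigma}]$ and $\kk[M]=\kk[T]$ have the same field of fractions, so $T=\mathrm{Spec}\,\kk[M]$ is a dense open $T$-stable subset of $X$, namely the union of the maximal-dimensional $T$-orbits. By the orbit--cone correspondence recalled just before the lemma (see \cite[Ch.3,~$\S$~3]{ful}), $X\setminus T$ is a closed $T$-stable set whose codimension-one irreducible components are exactly the divisors $D_\rho$, $\rho\in\Delta(1)$, while all its remaining components have codimension $\ge 2$.

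Next, take an arbitrary Weil divisor $D$ on $X$ and restrict it to the open subset $T$. Since $\kk[T]\cong\kk[M]$ is a Laurent polynomial ring, hence a unique factorization domain, $Cl(T)=0$, so $D|_T=(f)|_T$ for a suitable $f\in\kk(T)^\times=\kk(X)^\times$. Put $D'=D-(f)$, the difference taken in the group of Weil divisors of $X$. Restriction of divisors to an open set commutes with taking the divisor of a rational function, hence $D'|_T=0$; thus every prime component of $D'$ is contained in $X\setminus T$ and has codimension one in $X$, so it is one of the $D_\rho$. Therefore $D'=\sum_{\rho\in\Delta(1)}a_\rho D_\rho$ is $T$-invariant and linearly equivalent to $D$, which is the assertion.

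The only delicate point — and the step I would expect to cost the most — is the identification of the codimension-one part of $X\setminus T$ with $\{D_\rho:\rho\in\Delta(1)\}$, i.e. that $T$ is precisely the union of the top-dimensional $T$-orbits and that every orbit of codimension one is some $D_\rho$. This is exactly the orbit--cone dictionary for affine toric varieties already cited above, and once it is granted the remaining steps are routine.
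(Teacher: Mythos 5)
Your argument is correct and is exactly the standard one the paper invokes by citing \cite[Ch.3,~$\S$~4]{ful}: restrict to the dense torus, use $Cl(T)=0$ to write $D|_T=(f)|_T$, and observe that $D-(f)$ is then supported on the codimension-one components of $X\setminus T$, which are precisely the $D_\rho$. The paper gives no independent proof, so there is nothing to contrast; your write-up fills in the cited reference faithfully.
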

\label{Tinv}

Let $n_{\rho}$ be a unique generator of $\rho\cap N$, where
$\rho\in \Delta (1)$. Take $m \in M$. Then $\chi^m\colon T
\rightarrow \mathbb{K}^*$ is a regular function on $T$ and a
rational function on $X$. It is proved in \cite[Ch.3,~$\S$~3]{ful}
that $(\chi^m)=\sum \langle m,n_{\rho}\rangle D_{\rho}$.

The following construction is given in \cite{cox}.

\begin{de}
The Cox ring of a toric variety $X$ is a polynomial ring in $u$
variables enumerated by elements of $\Delta(1)$, where $u$ is the
number of one-dimensional faces in $\sigma$:
$$\mathrm{C}\mathrm{o}\mathrm{x}(X)=\mathbb{K}[x_{\rho} \mid\rho \in \Delta (1) ].$$
\end{de}

A monomial $\prod x_{\rho}^{a_{\rho}}$ corresponds to the
$T$-invariant divisor $D=\sum a_{\rho}D_{\rho}$. Let $x^D$ be this
monomial. Let us grade $\mathrm{C}\mathrm{o}\mathrm{x}(X)$ by the
group $\mathrm{C}\mathrm{l}(X)$ so that the degree of a monomial
$x^D$ is $[D]\in \mathrm{C}\mathrm{l}(X)$. Degrees of two
monomials are equal if these monomials are $\prod
x_{\rho}^{a_{\rho}}$ and $\prod x_{\rho}^{a_{\rho}+ \langle
m,n_{\rho} \rangle}$ for some $m\in M$.

\section {The total coordinate ring}
\label{tot}

Given an arbitrary variety $X$ and a divisor $D$ on it we denote
by $H^0(X,D)$ the following subspace in $\kk(X)$:
$$H^0(X,D)=\{f\in \mathbb{K}(X)| (f)+D\geq 0\}.$$If $X$ is toric
and $D=\sum a_{\rho}D_{\rho}$\, is a $T$\,-\,invariant divisor, we
have
$$H^0(X,D)=\bigoplus_{m\in P_D}\mathbb{K}\chi^m,$$where $P_D=\{m\in M \mid\forall
\rho\in \Delta(1): \langle m,n_{\rho} \rangle \geq - a_{\rho}\}$.

Let $X$ be an arbitrary variety. Suppose that
$\mathrm{C}\mathrm{l}(X)$ is a free finitely generated group. Let
$\{\alpha_1,\ldots, \alpha_s\}$ be its basis and $L_{\alpha_i}$ be
fixed divisors such that $[L_{\alpha_i}]=\alpha_i$. If $\alpha\in
\mathrm{C}\mathrm{l}(X)$, then we have $\alpha=\sum
l_i\alpha_i,\,l_i\in \mathbb{Z}$. We define $L_{\alpha}$ as $\sum
l_i L_{\alpha_i}$. Then we obtain a vector space graded by the
group $\mathrm{C}\mathrm{l}(X)$:
$$S(X)=\bigoplus_{\alpha\in \mathrm{C}\mathrm{l}(X)}H^0(X,L_{\alpha}).$$
Let us define a multiplication in $S(X)$ as follows. If $f\in
H^0(X,L_{\alpha})$, $g\in H^0(X,L_{\beta})$, then their product in
$S(X)$ is an element of $ H^0(X,L_{\alpha+\beta})$ equal to their
product in $\kk(X)$. 
This multiplication is extended to $S(X)$ by distributivity. The
ring $S(X)$ is called {\it the total coordinate ring} of the
variety $X$. It does not depend on basis chosen in
$\mathrm{C}\mathrm{l}(X)$ and on the choice of divisors
$L_{\alpha_i}$ (see \cite{BHH,HK}). If $X$ is a toric variety we
can choose $L_{\alpha_i}$ to be $T$-invariant.

For the following lemma, see \cite{cox}.
\begin{lem}\label {isom}
Assume that $\kk[X]^{\times}=\kk^{\times}$ for a toric variety
$X$. Then the graded algebras $S(X)$ and
$\mathrm{C}\mathrm{o}\mathrm{x}(X)$ are isomorphic.

\end{lem}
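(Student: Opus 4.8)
The statement to prove is Lemma~\ref{isom}: for a toric variety $X$ with $\kk[X]^\times = \kk^\times$, the graded algebras $S(X)$ and $\mathrm{Cox}(X)$ are isomorphic.

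The plan is to construct an explicit graded isomorphism between the two rings by matching monomials in $\mathrm{Cox}(X)$ with the characters $\chi^m$ that span the homogeneous components of $S(X)$. Here is the approach in order.

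First I would recall the structure of both sides. By definition $\mathrm{Cox}(X) = \kk[x_\rho \mid \rho \in \Delta(1)]$, and a monomial $x^D = \prod x_\rho^{a_\rho}$ (with all $a_\rho \geq 0$) carries degree $[D] \in \mathrm{Cl}(X)$, where $D = \sum a_\rho D_\rho$. On the other side, choosing the $L_{\alpha_i}$ to be $T$-invariant (permissible by the remark just before the lemma, since every divisor class on a toric variety contains a $T$-invariant representative by Lemma on $T$-invariant divisors), each homogeneous component $H^0(X, L_\alpha)$ of $S(X)$ has basis $\{\chi^m \mid m \in P_{L_\alpha}\}$, where $P_{L_\alpha} = \{ m \in M \mid \langle m, n_\rho\rangle \geq -a_\rho \text{ for all } \rho\}$ when $L_\alpha = \sum a_\rho D_\rho$. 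The key computational fact already supplied is $(\chi^m) = \sum_\rho \langle m, n_\rho\rangle D_\rho$.

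Second, I would define the map $\varphi\colon \mathrm{Cox}(X) \to S(X)$ on monomials. Given a monomial $x^D$ with $D = \sum a_\rho D_\rho \geq 0$ of degree $\alpha = [D]$, note that $D$ and $L_\alpha$ are linearly equivalent $T$-invariant divisors, so $D - L_\alpha = (\chi^{m_0})$ for a unique $m_0 \in M$ (unique because $\kk[X]^\times = \kk^\times$ forces the only characters that are principal-divisor-trivial to be constants, i.e.\ $m \mapsto \chi^m$ is injective on the relevant lattice). Then $(\chi^{m_0}) + L_\alpha = D \geq 0$, so $\chi^{m_0} \in H^0(X, L_\alpha)$, and I set $\varphi(x^D) = \chi^{m_0}$. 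Extending $\kk$-linearly, I must check: (i) $\varphi$ is well-defined and respects the grading --- immediate from construction; (ii) $\varphi$ is multiplicative --- if $x^{D_1}, x^{D_2}$ map to $\chi^{m_1}, \chi^{m_2}$ then $x^{D_1+D_2}$ maps to $\chi^{m_1+m_2} = \chi^{m_1}\chi^{m_2}$, using that the principal divisor of $\chi^{m_1+m_2}$ relative to $L_{\alpha_1+\beta_2}$ matches $D_1 + D_2$ by additivity of both $(\chi^m)$ in $m$ and of $L_\alpha$ in $\alpha$; (iii) $\varphi$ is bijective on each graded piece --- surjectivity because every $\chi^m \in H^0(X, L_\alpha)$ comes from the effective $T$-invariant divisor $(\chi^m) + L_\alpha$, and injectivity again from $\kk[X]^\times = \kk^\times$ giving injectivity of $m \mapsto \chi^m$ together with distinct monomials having distinct associated divisors.

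The main obstacle, and the place where the hypothesis $\kk[X]^\times = \kk^\times$ is essential, is establishing the bijection $\{\text{monomials of degree }\alpha\} \leftrightarrow P_{L_\alpha}$, i.e.\ that $D \mapsto m_0$ with $D - L_\alpha = (\chi^{m_0})$ is a bijection from effective $T$-invariant divisors in the class $\alpha$ to the polytope $P_{L_\alpha} \cap M$. Surjectivity is the identity $(\chi^m) + L_\alpha \geq 0 \iff m \in P_{L_\alpha}$, read off from the divisor formula. Injectivity requires that two effective $T$-invariant divisors linearly equivalent via a character must be equal only if that character is trivial --- equivalently that $(\chi^m) = 0 \implies \chi^m \in \kk[X]^\times = \kk^\times \implies m = 0$; without vanishing of nonconstant units this fails and the two rings genuinely differ. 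Once this bijection is in hand on each graded component, compatibility with multiplication is a routine check using the additivity of $m\mapsto (\chi^m)$ and of $\alpha \mapsto L_\alpha$, and $\varphi$ is the desired graded isomorphism.
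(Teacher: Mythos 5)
Your proof is correct and follows essentially the same route as the paper: for Lemma~\ref{isom} itself the paper only cites Cox, but your map is exactly the inverse of the isomorphism $\psi_D\colon \chi^m\mapsto x^{D+(\chi^m)}$ that the paper constructs in proving the more general Lemma~\ref{S_Cox}, resting on the same ingredients (choice of $T$-invariant representatives via Lemma~\ref{Tinv}, the formula $(\chi^m)=\sum\langle m,n_\rho\rangle D_\rho$, and the hypothesis $\kk[X]^\times=\kk^\times$ to get injectivity of $m\mapsto(\chi^m)$). No gaps beyond the harmless typo $L_{\alpha_1+\beta_2}$ for $L_{\alpha_1+\alpha_2}$.
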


The next lemma is a simple corollary of the Rosenlicht theorem
(see \cite{KKV}).

\begin{lem}\label{const}
Assume that a semisimple group G acts locally transitively on a
variety $X$. Then there are no invertible non-constant functions
in $\kk[X]$.
\end{lem}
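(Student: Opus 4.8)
The plan is to transport invertible functions up to the group $G$ itself and then to invoke the fact — which is exactly what Rosenlicht's theorem supplies in this setting — that a semisimple group has no nonconstant invertible regular functions. Recall that $G$, being semisimple, is connected.

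Concretely, local transitivity means that $X$ contains a dense open $G$-orbit $\Omega$; fix a base point $x_0\in\Omega$ and let $\pi\colon G\to\Omega$, $\pi(g)=gx_0$, be the orbit map, which is a surjective, hence dominant, morphism. Given $f\in\kk[X]^{\times}$, its restriction $f|_\Omega$ is regular on $\Omega$ with regular inverse $(f^{-1})|_\Omega$, so $f|_\Omega\in\kk[\Omega]^{\times}$; pulling back along $\pi$, the function $f\circ\pi$ lies in $\kk[G]^{\times}$, since $f^{-1}\circ\pi$ is its inverse. Because $X$ is irreducible and $\Omega$ is dense, $f$ is constant as soon as $f|_\Omega$ is, and $f|_\Omega$ is constant as soon as $f\circ\pi$ is, by surjectivity of $\pi$. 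Thus everything reduces to the statement $\kk[G]^{\times}=\kk^{\times}$.

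This last point is Rosenlicht's theorem: for a connected algebraic group $G$ one has $\kk[G]^{\times}=\kk^{\times}\cdot\mathfrak{X}(G)$, every invertible regular function being a scalar multiple of a character. Since $G$ is semisimple, $G=(G,G)$, so every character of $G$ is trivial, $\mathfrak{X}(G)=\{1\}$, and hence $\kk[G]^{\times}=\kk^{\times}$. This finishes the proof.

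I do not expect a genuine obstacle — this is precisely why the statement is advertised as a simple corollary. The only points requiring a little care are the routine verifications that restriction to $\Omega$ and pullback along $\pi$ preserve invertibility, and citing Rosenlicht's theorem in the exact form used; alternatively one can package the middle steps via the standard identification $\kk[G/H]^{\times}/\kk^{\times}\cong\{\chi\in\mathfrak{X}(G):\chi|_H=1\}$, which is trivial whenever $G$ is semisimple.
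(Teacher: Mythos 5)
Your argument is correct and is exactly the route the paper intends: the paper gives no written proof, merely remarking that the lemma is a simple corollary of Rosenlicht's theorem, and your chain (restrict to the dense orbit, pull back along the orbit map, apply $\kk[G]^{\times}=\kk^{\times}\cdot\mathfrak{X}(G)$ and $\mathfrak{X}(G)=\{1\}$ for $G$ semisimple) is the standard way to fill that in. No gaps.
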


\begin{corollary}\label{cl}
If for a toric variety $X$ the conditions of Lemma \ref{const} are
fulfilled, then the algebra $S(X)$ is free.
\end{corollary}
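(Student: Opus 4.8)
The proof is short once the preceding lemmas are in hand; the plan is simply to chain them together. First I would note that the hypothesis of the corollary --- a semisimple group acting locally transitively on $X$ --- is exactly the hypothesis of Lemma \ref{const}, whose conclusion is $\kk[X]^{\times} = \kk^{\times}$. This is the sole role played by the group action: after this step it is irrelevant which group acts and how; all that matters is the absence of non-constant invertible regular functions on $X$.

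With $\kk[X]^{\times} = \kk^{\times}$ established and $X$ toric, I would then invoke Lemma \ref{isom} to obtain a graded isomorphism $S(X) \cong \mathrm{Cox}(X)$. By the definition of the Cox ring of a toric variety, $\mathrm{Cox}(X) = \kk[x_{\rho} \mid \rho \in \Delta(1)]$ is a polynomial ring in $u = \#\Delta(1)$ variables, hence a free commutative algebra; transporting this structure along the isomorphism shows that $S(X)$ is free, which is the assertion.

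I do not expect any genuine obstacle here: the substantive content lies entirely in Lemmas \ref{const} and \ref{isom}, and the corollary is their formal consequence. The only point deserving a word of care is that $S(X)$ be a well-defined object in the first place, i.e. that the total coordinate ring exists and is independent of the auxiliary choices --- for toric $X$ this is guaranteed by Cox's construction together with \cite{BHH, HK}, taking the representatives $L_{\alpha_i}$ to be $T$-invariant as remarked after the definition of $S(X)$. Granting this, the corollary follows at once.
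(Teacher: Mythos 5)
Your proof is correct and follows exactly the route the paper intends: the corollary is stated without an explicit proof precisely because it is the immediate concatenation of Lemma \ref{const} (which yields $\kk[X]^{\times}=\kk^{\times}$) with Lemma \ref{isom} (which then identifies $S(X)$ with the polynomial ring $\mathrm{Cox}(X)$). Nothing further is needed.
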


Next we consider the problem of lifting of an action of an
algebraic group $G$ on a variety $X$ to an action on the spectrum
of the ring $S(X)$. In this context we can partially invert
Corollary~\ref{cl}.

\begin{prop}\label{G}
Let $G$ be a simply connected semisimple group acting on a variety
$X$ and $H\subset G$ be a closed subgroup. Assume that the
character group $\mathfrak X(H)$ is trivial. Suppose
$G/H\hookrightarrow X$ is an open equivariant embedding. Then
$\mathrm{C}\mathrm{l}(X)$ is free finitely generated and the
following conditions are equivalent:

1) the variety $X$ is toric;

2) there exist a $G$-module $V$ and a linear action of a torus
$\widehat{T}:V$ commuting with the action of $G$ such,
 that
$X\stackrel{G}{\cong}V/\!\!/\widehat{T}$;

3) the algebra $S(X)$ is free.
\end{prop}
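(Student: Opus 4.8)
We outline a proof.

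The plan is first to check that $\mathrm{Cl}(X)$ is free and finitely generated, so that $S(X)$ is even defined, and then to prove the cycle $(1)\Rightarrow(3)\Rightarrow(2)\Rightarrow(1)$. For the class group: since $G$ acts on $X$ with the dense orbit $G/H$, the boundary $X\setminus(G/H)$ is a proper closed $G$-invariant subset, hence contains only finitely many prime divisors $D_1,\dots,D_k$ (necessarily $G$-invariant), and removing them leaves an open subset whose complement in $X$ has codimension $\geq 2$ over $G/H$, hence the same divisor class group as $G/H$. Because $G$ is simply connected and semisimple we have $\mathrm{Cl}(G)=0$ and $\mathfrak X(G)=0$, so $\mathrm{Cl}(G/H)\cong\mathfrak X(H)$, which is trivial by hypothesis. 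The exact sequence $\bigoplus_i\mathbb Z[D_i]\to\mathrm{Cl}(X)\to\mathrm{Cl}(G/H)=0$ then shows $\mathrm{Cl}(X)$ is finitely generated, and it is torsion free because a torsion class would produce a connected cyclic cover of $X$ whose restriction over $G/H$ is disconnected (as $\mathrm{Cl}(G/H)=0$ and $\kk[G/H]^{\times}=\kk^{\times}$), which is impossible. Thus $\mathrm{Cl}(X)\cong\mathbb Z^s$, the ring $S(X)=\bigoplus_\alpha H^0(X,L_\alpha)$ is well defined, and its $\mathrm{Cl}(X)$-grading yields an action of the torus $\widehat T=\mathrm{Hom}(\mathrm{Cl}(X),\kk^{\times})$ on $\mathrm{Spec}\,S(X)$ with $S(X)^{\widehat T}=S(X)_0=H^0(X,0)=\kk[X]$, so that $\mathrm{Spec}\,S(X)/\!\!/\widehat T=X$ as varieties.

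The implication $(1)\Rightarrow(3)$ is exactly Corollary~\ref{cl}: by Lemma~\ref{const} we have $\kk[X]^{\times}=\kk^{\times}$, so by Lemma~\ref{isom} $S(X)\cong\mathrm{Cox}(X)$ is a polynomial ring, in particular free. For $(2)\Rightarrow(1)$, write $X\stackrel{G}{\cong}V/\!\!/\widehat T$ with $V$ a $(G\times\widehat T)$-module; after diagonalizing, the image of $\widehat T$ sits inside the standard big torus $T_V\cong(\kk^{\times})^{\dim V}$ of the affine space $V$, and since $T_V$ is abelian it normalizes $\widehat T$ and descends to an action of the torus $T_V/\widehat T$ on $V/\!\!/\widehat T$. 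The image of the open $T_V$-orbit of $V$ is a single $T_V/\widehat T$-orbit, hence locally closed and dense, hence open; as $V/\!\!/\widehat T$ is normal and affine, it is toric.

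The heart of the matter is $(3)\Rightarrow(2)$. Assume $S(X)$ is free, so that $V:=\mathrm{Spec}\,S(X)$ is an affine space carrying the $\widehat T$-action above; it remains to lift the $G$-action on $X$ to an action on $V$ commuting with $\widehat T$, and then to make it linear. Since $G$ is connected it acts trivially on $\mathrm{Cl}(X)$, so for $g\in G$ and a basis divisor $L_{\alpha_i}$ we have $g\cdot L_{\alpha_i}-L_{\alpha_i}=(\phi_{g,i})$ for a rational function $\phi_{g,i}$ that is unique up to a scalar because $\kk[X]^{\times}=\kk^{\times}$; multiplication by the appropriate products of the $\phi_{g,i}$ carries $g\cdot H^0(X,L_\alpha)$ onto $H^0(X,L_\alpha)$ for every $\alpha$. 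A priori the scalar ambiguity only yields a homomorphism $G\to\mathrm{Aut}(S(X))/\kk^{\times}$, but the obstruction to lifting it to $\mathrm{Aut}(S(X))$ lives in a group assembled from characters and $\kk^{\times}$-central extensions of $G$, which vanishes since $G$ is semisimple and simply connected; this produces a $G$-action on $S(X)$ by graded automorphisms, hence a $G$-action on $V$ commuting with $\widehat T$ and inducing the original $G$-action on $S(X)^{\widehat T}=\kk[X]$, so $V/\!\!/\widehat T\stackrel{G}{\cong}X$. Finally, by~\cite{kp} the resulting $(G\times\widehat T)$-action on the affine space $V$ is linear, i.e.\ $V$ is a $(G\times\widehat T)$-module, which is~(2). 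I expect the lifting step to be the main obstacle: one must choose the scalar normalizations of the $\phi_{g,i}$ compatibly in families over $G$, and it is simple-connectedness of $G$ (not merely semisimplicity) together with $\kk[X]^{\times}=\kk^{\times}$ that makes this possible; verifying freeness and torsion-freeness of $\mathrm{Cl}(X)$ in the first step is the second technical ingredient.
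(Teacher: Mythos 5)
Your overall architecture (first establish that $\mathrm{Cl}(X)$ is free and finitely generated, then run the cycle $1)\Rightarrow 3)\Rightarrow 2)\Rightarrow 1)$) matches the paper, and your arguments for $1)\Rightarrow 3)$ and $2)\Rightarrow 1)$ are essentially the paper's. The genuine problem is in $3)\Rightarrow 2)$, where you set up a lifting problem that never needs to be solved and then leave its key step unproved. You take arbitrary representatives $L_{\alpha_i}$, note that $g\cdot L_{\alpha_i}-L_{\alpha_i}=(\phi_{g,i})$ only determines $\phi_{g,i}$ up to a scalar, and assert that the obstruction to lifting the resulting homomorphism $G\to\mathrm{Aut}(S(X))/\kk^{\times}$ ``lives in a group assembled from characters and $\kk^{\times}$-central extensions of $G$'' and vanishes by simple connectedness. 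You give no construction of that obstruction group, no verification that $g\mapsto\phi_{g,i}$ can be chosen regularly in $g$, and no proof of the vanishing --- and you yourself flag this as the main obstacle. The paper sidesteps it entirely: its first step shows that $\mathrm{Cl}(X)$ is \emph{freely generated by the classes of the boundary divisors} $E_i$ (the prime components of $X\setminus(G/H)$), which are $G$-invariant because $G$ is connected. Choosing $L_{[E_i]}=E_i$, every $L_\alpha=\sum l_iE_i$ is itself a $G$-invariant divisor, so the natural action $g\cdot f(x)=f(g^{-1}x)$ already preserves each $H^0(X,L_\alpha)$ and acts on $S(X)$ by graded algebra automorphisms --- no cocycle, no central extension. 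This is also why the paper's freeness argument is more useful than yours: a relation $\sum c_iE_i=(f)$ forces $f$ to be a $G$-semiinvariant, hence $G$-invariant since $\mathfrak X(G)=0$, hence constant, which kills torsion and non-torsion relations among the $[E_i]$ in one stroke; your cyclic-cover argument addresses only torsion and is itself only sketched.

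A second, smaller omission: to apply the Kraft--Popov linearization theorem to $V=\mathrm{Spec}\,S(X)\cong\kk^{u}$ one must check that the only $(G\times\widehat T)$-invariant regular functions on $V$ are the constants. The paper verifies this via $S(X)^{\widehat T}=H^0(X,0)\cong\kk[X]$ and $\kk[X]^{G}=\kk$ (because $G$ has a dense orbit in $X$); you invoke \cite{kp} without checking this hypothesis. With the $G$-invariant choice of the $L_\alpha$ replacing your lifting argument, and this verification added, your proof closes up and coincides with the paper's.
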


\begin{proof}
First we prove that $\mathrm{C}\mathrm{l}(X)$ is free finitely
generated. Denote by $E_1,E_2\ldots E_s$ all prime divisors in
$X\setminus (G/H)$. Since $G$ is connected, all the divisors
$E_i$ are $G$-invariant. Since $G$ is a simply connected
semisimple group and $H$ is its closed subgroup,
$\mathrm{C}\mathrm{l}(G/H)\cong\mathrm{Pic}(G/H)\cong\mathfrak{X}(H)$
(see \cite[Prop.~1]{p2}, \cite[Th.~4]{p2}). Therefore,
$\mathrm{C}\mathrm{l}(G/H) = 0$. Hence $\mathrm{C}\mathrm{l}(X)$
is generated by $G$-invariant divisors, that is, by divisors
$\sum a_iE_i$. If $c_1E_1+\ldots+c_sE_s=(f)$, then, since $E_i$
are $G$-invariant, $f$ is $G$-semiinvariant. But, since $G$ is
semisimple, we have $\mathfrak{X}(G)=0$ and hence $f$ is
$G$-invariant. This implies $f = const$, and
$\mathrm{C}\mathrm{l}(X)$ is freely generated by the divisor
classes $[E_i]$.

Implication {\it 1)$\Rightarrow$ 3)} is already proved in
Corollary~\ref{cl}.

{\it 3)$\Rightarrow$ 2)} The group $G$ acts on $\kk(X)$ as $g\cdot
f(x)=f(g^{-1}\cdot x)$. Consider $D=\sum a_iE_i$ and $f\in
H^0(X,D)$. Then
$$(f)+D\geq 0;  $$
$$(g\cdot f)+D\geq 0\Rightarrow g\cdot
f\in H^0(X,D),$$ that is, $H^0(X,D)$ is a $G$-module. Obviously,
the $G$-action respects the multiplication in $S(X)$.

Let us choose $L_{[E_i]}=E_i$. Then
$$S(X)=\bigoplus_{\alpha\in \mathrm{C}\mathrm{l}(X)}H^0(X,L_{\alpha})
=\bigoplus_{D=a_1E_1+\ldots + a_sE_s}H^0(X,D).$$

Consider an $s$-dimensional torus $\widehat{T}$. Its action on
$H^0(X,D)$ is defined as follows. If $t=(t_1,\ldots,t_s)\in
\widehat{T}$, then $t\cdot f=t_1^{a_1}\ldots t_s^{a_s}f$, while
$f\in H^0(X,D)$, $D=a_1E_1+\ldots +a_sE_s$. This action is
extended to $S(X)$ by linearity. The actions of $G$ and
$\widehat{T}$ on $S(X)$ commute. Therefore,
$$(\widehat{T}\times G): \mathrm{Spec}\,S(X).$$ The
only $G\times\widehat{T}$-invariant functions on $S(X)$ are
constants. Indeed,
$$S(X)^{\widehat{T}\times G}=(S(X)^{\widehat{T}})^G$$
and
$$S(X)^{\widehat{T}}=H^0(X,0)\cong \kk[X].$$
The $G$-action on $X$ has an open orbit. Hence, $\kk[X]^G=\kk$.
Since $S(X)$ is free, we have $\mathrm{Spec}\,S(X)\cong \kk^u$.
By Kraft-Popov's theorem \cite{kp}, an action of a reductive group
on $\mathbb{K}^u$ is equivalent to a linear action, if the
regular functions invariant under this action are only constants.
Now denote the $(\widehat{T}\times G)$-module
$\mathrm{Spec}\,S(X)$ by $V$. Since $S(X)^{\widehat{T}}=\kk[X]$,
$X\stackrel{G}{\cong}V/\!\!/\widehat{T}$.

{\it 2)$\Rightarrow$ 1)} Since the $\widehat{T}$-action on $V$ is
linear, all elements of $\widehat{T}$ are simultaneously
diagonalizable. Consider the torus $\overline{T}$ consisting of
all diagonal operators in the same basis in which $\widehat{T}$ is
diagonal. Then the action
$T=\overline{T}/\widehat{T}:V/\!\!/\widehat{T}$ has an open orbit.
Hence $X$ is a toric variety.

\end{proof}

\section{Lifting of the action}
\label{podnatie}

It follows from the proof of Corollary \ref{G} that, if
$\mathfrak{X}(H)=0$, then  the action $G:X$ may be lifted to an
action on the spectrum of the total coordinate ring. In this
section we define a construction similar to the construction of
total coordinate ring and consider the problem of lifting of the
action, when the group of characters $\mathfrak{X}(H)$ is finite.

Suppose that the divisor class group of a variety $X$ is finitely
generated. Let us choose a generative system
$\xi_1,\ldots,\xi_n,\eta_1,\ldots,\eta_s$ of the group
$$\mathrm{C}\mathrm{l}(X)\cong \mathbb{Z}^n\oplus\mathbb{Z}_{k_1} \oplus\ldots\oplus
\mathbb{Z}_{k_s},$$ where $\{\xi_i\}$ is a basis of the free group
$\mathbb{Z}^n$, and $\eta_j$ is a generator of the cyclic group
$\mathbb{Z}_{k_j}$. Let us fix some divisors
$E_1,\ldots,E_n,W_1,\ldots,W_s$ on $X$, such that $[E_i]=\xi_i$,
$[W_j]=\eta_j.$ Now we can define a generalisation of total
coordinate ring. Consider a linear space
$$S(X)=\bigoplus_{\lambda_i\in \mathbb{Z},\ \mu_j=0,1,\ldots,k_j-1
}H^0(X,\sum \lambda_i E_i+\sum\mu_j W_j).$$ Let us choose rational
functions $F_1,\ldots,F_s$ such that $k_sW_s=(F_s).$ We define a
multiplication $*$ on $S(X)$ as follows. If $f\in H^0(X,\sum a_i
E_i+\sum b_j W_j)$ and $g\in H^0(X,\sum c_i E_i+\sum d_j W_j)$,
then
$$f*g=fg\prod F_j^{[\frac{b_j+d_j}{k_j}]}\in H^0\left(X,\sum
(a_i+c_i) E_i+\sum \left\{\frac{b_j+d_j}{k_j}\right\}k_j
W_j\right) ,$$ where $[x]$ is the integer part of $x$ and $\{x\}$
is the fractional part of $x$. We need to prove that $fg\prod
F_j^{[\frac{b_j+d_j}{k_j}]}$ belongs to $$H^0\left(X,\sum
(a_i+c_i) E_i+\sum \left\{\frac{b_j+d_j}{k_j}\right\}k_j
W_j\right).$$ It follows from
$$
(f)+\sum a_iE_i+\sum b_j W_j \geq 0;
$$
$$
(g)+\sum c_iE_i+\sum d_j W_j \geq 0;
$$
\begin{multline*}
(fg\prod F_j^{[\frac{b_j+d_j}{k_j}]})+\sum(a_i+c_i)E_i+
\sum\left\{\frac{b_j+d_j}{k_j}\right\}k_jW_j =\\=(fg)+\sum
(a_i+c_i)E_i+\sum (b_j+d_j) W_j\geq 0.
\end{multline*}

Extending $*$ by distributivity, we obtain a commutative algebra
$S(X)$. The algebra $S(X)$ is graded by the group
$\mathrm{C}\mathrm{l}(X)$.

\begin{lem}\label{Nezavis}
The algebra $S(X)$ does not depend on the choice of
$E_1,\ldots,E_n,W_1,\ldots,W_s$.

\end{lem}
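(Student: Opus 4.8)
The plan is to show that any two choices of defining divisors yield the same algebra, by establishing an explicit graded isomorphism that is the identity on each homogeneous component. First I would reduce to the case where the two collections differ in a single divisor: passing from $E_1,\ldots,E_n,W_1,\ldots,W_s$ to $E_1',\ldots$ can be done one divisor at a time, so it suffices to treat, say, replacing $E_1$ by $E_1'$ with $[E_1]=[E_1']=\xi_1$, and similarly replacing $W_1$ by $W_1'$ with $[W_1]=[W_1']=\eta_1$. In each case $E_1-E_1'=(\varphi)$ for some $\varphi\in\kk(X)^{\times}$, and $W_1-W_1'=(\psi)$ for some $\psi\in\kk(X)^{\times}$.

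Next I would write down the candidate isomorphism. For the free generators, on the component $H^0(X,\sum\lambda_iE_i+\sum\mu_jW_j)$ define $\Phi(f)=f\varphi^{\lambda_1}$; since $(f)+\sum\lambda_iE_i+\sum\mu_jW_j\geq 0$ is equivalent to $(f\varphi^{\lambda_1})+\lambda_1E_1'+\sum_{i\geq 2}\lambda_iE_i+\sum\mu_jW_j\geq 0$, this is a well-defined bijection between the corresponding homogeneous components of the two algebras. For the torsion generators the situation is slightly more delicate because the representatives $\mu_j$ are pinned to $\{0,\ldots,k_j-1\}$ and the multiplication involves the auxiliary functions $F_j$ with $(F_j)=k_jW_j$. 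Here I would set $\Phi(f)=f\psi^{\mu_1}$ on the $(\mu_j)$-component; one checks this lands in $H^0(X,\mu_1W_1'+\cdots)$, and that the auxiliary function $F_1$ for the new system may be taken to be $F_1\psi^{k_1}$, whose divisor is indeed $k_1W_1'$. Then a direct check shows $\Phi(f*g)=\Phi(f)*'\Phi(g)$: the fractional-part and integer-part bookkeeping matches up precisely because $[\tfrac{b_1+d_1}{k_1}]$ is unchanged and the extra $\psi$-powers combine as $\psi^{\mu_1}\psi^{\nu_1}=\psi^{\mu_1+\nu_1}$, with the overflow $\psi^{k_1}$ absorbed into the redefined $F_1$.

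The main obstacle I anticipate is the torsion part: one must be careful that the multiplication rule, which is \emph{defined using} the chosen $F_j$, transforms correctly under the change of $W_j$, and that the newly chosen $F_j'$ can be taken compatibly (i.e.\ $F_j'=F_j\psi^{k_j}$ rather than an arbitrary function with divisor $k_jW_j'$). In fact one should also check independence from the choice of the $F_j$ themselves among all functions with divisor $k_jW_j$ — but two such differ by a unit, hence by a nonzero constant since $\kk[X]^{\times}$ contributes only constants in the cases of interest (or, in general, absorbing a unit $u$ into $F_j$ only rescales the isomorphism on graded components and does not change the algebra up to isomorphism). Assembling the one-divisor-at-a-time isomorphisms gives a graded algebra isomorphism between the two versions of $S(X)$, proving the lemma.
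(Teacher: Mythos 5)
Your proposal is correct and follows essentially the same route as the paper: there the isomorphism is written in one step as $f\mapsto f\prod J_i^{\lambda_i}\prod R_j^{\mu_j}$ (where $\widetilde{E}_i=E_i-(J_i)$, $\widetilde{W}_j=W_j-(R_j)$) and multiplicativity is checked against the transformed auxiliary functions $F_jR_j^{-k_j}$, which is exactly your computation carried out for all divisors at once rather than one at a time. The only slip is a sign: with your convention $W_1-W_1'=(\psi)$ the new auxiliary function must be $F_1\psi^{-k_1}$ (not $F_1\psi^{k_1}$) in order to have divisor $k_1W_1'$ and to make the integer-part bookkeeping cancel.
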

\begin{proof}
Let
$\widetilde{E}_1,\ldots,\widetilde{E}_n,\widetilde{W}_1,\ldots,\widetilde{W}_s$
be other divisors on $X$ such that $[\widetilde{E}_i]=\xi_i$,
$[\widetilde{W_j}]=\eta_j.$ Then $\widetilde{E}_i=E_i-(J_i)$,
$\widetilde{W}_j=W_j-(R_j).$ There exists a map
$$\varphi\colon H^0(X,\sum \lambda_iE_i+\sum\mu_jW_j )\rightarrow
H^0(X,\sum \lambda_i\widetilde{E}_i+\sum\mu_j\widetilde{W}_j );\
f\mapsto f\prod J_i^{\lambda_i}\prod R_j^{\mu_j}.$$ Let us combine
the maps $\varphi$ with different $\lambda_i$ and $\mu_j$ and
extend this map to a linear map. We obtain a map between the
algebras $S(X)$ corresponding to different choices of divisors
$E_1,\ldots,E_n,W_1,\ldots,W_s$. Suppose $f\in H^0(X,\sum
a_iE_i+\sum b_jW_j )$, $h\in H^0(X,\sum c_iE_i+\sum d_jW_j ).$
Then
\begin{multline*}
\varphi(f)*\varphi(g)=\varphi(f)\varphi(g)\prod
\left(F_jR_j^{-k_j}\right)^{[\frac{b_j+d_j}{k_j}]}=\\=\left(f\prod
J_i^{a_i}\prod R_j^{b_j}\right)\left(h\prod J_i^{c_i}\prod
R_j^{d_j}\right)\prod
\left(F_jR_j^{-k_j}\right)^{[\frac{b_j+d_j}{k_j}]}=\\=
\left(fh\prod F_j^{[\frac{b_j+d_j}{k_j}]}\right)\prod
J_i^{a_i+c_i}\prod R_j^{\{\frac{b_j+d_j}{k_j}\}k_j}=\varphi(f*h).
\end{multline*}
Hence, $\varphi$ is an isomorphism.
\end{proof}
\begin{rem}
The algebra $S(X)$ does not depend on a splitting of
$\mathrm{C}\mathrm{l}(X)$ into a direct sum of cyclic groups and
on the choice of a system of generators of the group
$\mathrm{C}\mathrm{l}(X)$.
\end{rem}

\begin{prop}\label{dejst}
Let $G$ be a semisimple simply connected group acting on a variety
$X$ and $H\subset G$ be a closed subgroup. Assume that the group
of characters $\mathfrak{X}(H)$ is finite. Suppose
$G/H\hookrightarrow X$ is an open equivariant embedding. Then
$\mathrm{C}\mathrm{l}(X)$ is finitely generated and there exists
an action $G:S(X)$ preserving homogeneous components and
coinciding on $S(X)_0\cong \kk[X]$ with the $G$-action on
$\kk[X]$.
\end{prop}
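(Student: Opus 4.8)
The plan is to mimic the proof of Proposition~\ref{G} as closely as possible, the essential new difficulty being that $\mathrm{C}\mathrm{l}(X)$ now has torsion, so $S(X)$ is the twisted construction from Section~\ref{podnatie} rather than the honest total coordinate ring. First I would establish finite generation of $\mathrm{C}\mathrm{l}(X)$ exactly as before: let $E_1,\dots,E_n$ be the prime divisors in $X\setminus(G/H)$ (all $G$-invariant since $G$ is connected), and recall $\mathrm{C}\mathrm{l}(G/H)\cong\mathfrak{X}(H)$ by \cite{p2}, which is finite. The restriction exact sequence $\bigoplus\mathbb{Z}E_i\to\mathrm{C}\mathrm{l}(X)\to\mathrm{C}\mathrm{l}(G/H)\to 0$ then shows $\mathrm{C}\mathrm{l}(X)$ is finitely generated, and the free part is spanned by classes of the $E_i$ (an integral combination of the $E_i$ that is principal comes from a $G$-semiinvariant, hence $G$-invariant since $\mathfrak{X}(G)=0$, hence constant function).

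\textbf{Construction of the action.} I would define the $G$-action on each homogeneous component $H^0(X,D)$ by $(g\cdot f)(x)=f(g^{-1}x)$, which lands in $H^0(X,D)$ whenever $D$ is a $G$-invariant divisor — in particular for every $D$ of the form $\sum\lambda_iE_i+\sum\mu_jW_j$ once the $W_j$ are chosen $G$-invariantly. This is the point where I must be careful: the $W_j$ representing the torsion generators $\eta_j$ need to be taken $G$-invariant (which is possible — each $\eta_j\in\mathrm{C}\mathrm{l}(X)$, being in the image of $\bigoplus\mathbb{Z}E_i$ up to the map to $\mathrm{C}\mathrm{l}(G/H)$, has a $G$-invariant representative, e.g. a combination of the $E_i$). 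By Lemma~\ref{Nezavis} the resulting algebra $S(X)$ is independent of this choice, so I lose nothing. Similarly the functions $F_j$ with $(F_j)=k_jW_j$ are then $G$-semiinvariant, hence $G$-invariant, hence $G$ acts on each $H^0(X,D)$ compatibly with multiplication by $F_j$.

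\textbf{Multiplicativity and the degree-zero piece.} Next I would check that $G$ respects the twisted product $*$: for $f\in H^0(X,\sum a_iE_i+\sum b_jW_j)$ and $g\in H^0(X,\sum c_iE_i+\sum d_jW_j)$ we have $f*g=fg\prod F_j^{[(b_j+d_j)/k_j]}$, and since $G$ acts on $\kk(X)$ by field automorphisms fixing each $F_j$, $g\cdot(f*g')=(g\cdot f)*(g\cdot g')$. Extending by linearity gives the $G$-action on all of $S(X)$, preserving the $\mathrm{C}\mathrm{l}(X)$-grading by construction. Finally, $S(X)_0=H^0(X,0)=\{f\in\kk(X)\mid(f)\geq 0\}=\kk[X]$, and the $G$-action restricted there is precisely $(g\cdot f)(x)=f(g^{-1}x)$, the standard $G$-action on $\kk[X]$. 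That completes the argument.

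\textbf{Expected main obstacle.} The only genuinely delicate point is verifying that the torsion representatives $W_j$ (and hence the $F_j$) can be chosen $G$-stable so that the $G$-action is defined on the nose on each twisted component; everything else is a transcription of the $\mathfrak{X}(H)=0$ case. One subtlety worth stating explicitly: $G$ need not fix $F_j$ a priori — it fixes $F_j$ up to a character of $G$, which vanishes by semisimplicity, so $g\cdot F_j=c_g F_j$ with $c_g\in\kk^{\times}$; absorbing this constant is harmless since $F_j$ enters $*$ only through integer powers multiplying elements of $H^0$, and one checks the ambiguity is reconciled by Lemma~\ref{Nezavis} (or, alternatively, one simply works with a $G$-invariant $F_j$, available because the divisor $k_jW_j$ is $G$-invariant and principal, so its defining function is $G$-semiinvariant $=$ $G$-invariant). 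Once this is pinned down, the statement follows.
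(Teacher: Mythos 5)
There is a genuine gap, and it sits exactly at the point you flag as ``the only genuinely delicate point'': the torsion representatives $W_j$ \emph{cannot} be chosen $G$-invariant. Since $G/H$ is the dense orbit, any $G$-invariant prime divisor of $X$ is contained in $X\setminus(G/H)$, i.e.\ is one of the $E_1,\dots,E_n$; hence every $G$-invariant Weil divisor is an integral combination of the $E_i$. But the paper shows (in the proof of Proposition~\ref{G}) that the classes $[E_i]$ generate a \emph{free} subgroup of $\mathrm{C}\mathrm{l}(X)$, namely the kernel of the restriction $\mathrm{C}\mathrm{l}(X)\to\mathrm{C}\mathrm{l}(G/H)\cong\mathfrak{X}(H)$. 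A nontrivial torsion class $\eta_j$ therefore does not lie in this subgroup (a torsion element of a free group is zero), restricts to a nontrivial class on $G/H$, and admits no $G$-invariant representative at all. Your parenthetical justification (``has a $G$-invariant representative, e.g.\ a combination of the $E_i$'') is false precisely when $s\geq 1$, which is the only new case beyond Proposition~\ref{G}. The same problem kills the fallback in your last paragraph: $k_jW_j=(F_j)$ is $G$-invariant only if $F_j$ is $G$-semiinvariant, hence $G$-invariant, hence constant (the orbit is dense), hence $k_jW_j=0$ --- so in general neither $W_j$ nor $F_j$ can be made $G$-stable, and the naive action $g\cdot f$ does not preserve $H^0(X,\sum\lambda_iE_i+\sum\mu_jW_j)$.

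This is exactly the obstruction the paper's proof is built to circumvent. It pulls $W_j\cap(G/H)$ back along $\pi\colon G\to G/H$, where it becomes principal, $(f_j)$ with $f_j\in\kk(G)$ (using $\mathrm{C}\mathrm{l}(G)=0$), arranges $f_j^{k_j}\in\kk(X)$ with $k_jW_j=(f_j^{k_j})$, and then defines a \emph{twisted} action
$$g\bullet f=(g\cdot f)\prod\Bigl(\frac{g\cdot f_j}{f_j}\Bigr)^{\mu_j},$$
checking that the correction factor $\frac{g^{-1}\cdot f_j}{f_j}$ is $H$-invariant and so lies in $\kk(X)$, that $g\bullet f$ again lies in $H^0(X,D)$, that $\bullet$ is indeed an action, and that it respects the product $*$. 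None of this machinery appears in your proposal, and without it (or some substitute) the action on the torsion-graded components is simply not defined. The parts of your argument concerning finite generation of $\mathrm{C}\mathrm{l}(X)$ and the degree-zero component are fine, but they are the parts already contained in Proposition~\ref{G}.
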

\begin{proof}
We denote by $E_1,\ldots,E_n$ all prime divisors in
$X\setminus(G/H)$. Since $G$ is semisimple, these divisors are
$G$-invariant. From \cite[Prop.~1]{p2} and \cite[Th.~4]{p2} it
follows that $\mathrm{C}\mathrm{l}(G/H)\cong\mathfrak{X}(H)$. The
divisors $E_1,\ldots,E_n$ generate a free subgroup
$\mathbb{Z}^n\subset \mathrm{C}\mathrm{l}(X)$. For any divisor
$D$ in $X$ there exists a linear combination $\sum \lambda_i E_i$
such that the support of $D-\sum \lambda_i E_i$ lies in $G/H.$
Hence, the quotient group $\mathrm{C}\mathrm{l}(X)/\mathbb{Z}^n$
is finite. Therefore, we have
 $$\mathrm{C}\mathrm{l}(X)=\mathbb{Z}^n\oplus\mathbb{Z}_{k_1} \oplus\ldots\oplus
\mathbb{Z}_{k_s}.$$  For any $\alpha\in \mathrm{C}\mathrm{l}(X)$
we have
$\alpha=(\beta_1,\ldots,\beta_n,\alpha_1,\ldots,\alpha_s)$, where
$\beta_i\in \mathbb{Z}$, $\alpha_j\in\mathbb{Z}_{k_j}$. Let us
consider divisors $W_1,\ldots ,W_s$ such that
$[W_j]=(0,\ldots,0,1,0,\ldots,0),$ where 1 stands at the $(n+j)$th
position.

By \cite[Prop.~1]{p2}, $\mathrm{C}\mathrm{l}(G)=0$. This means
that all divisors in $G$ are principal. Suppose $\pi\colon
G\rightarrow G/H$ is the factorisation morphism and $i\colon
G/H\rightarrow X$ is the embedding. These morphisms induce the
embeddings of fields
$\kk(X)\stackrel{i^*}\hookrightarrow\kk(G/H)\stackrel{\pi^*}\hookrightarrow\kk(G).$
The homogeneous space $G/H$ is smooth. Hence any Weil divisor in
$G/H$ is a Cartier divisor. Since $W_j\cap (G/H)$ is a Cartier
divisor, there exists an open covering $\{U_\alpha\}$ of $G/H$
such that $W_j\cap U_\alpha=(\varphi_\alpha).$ Let $Q$ be the
pullback of $W_j\cap (G/H)$ in $G$. We have $Q\cap
\pi^{-1}(U_\alpha)=(\pi^*(\varphi_\alpha)).$ Since $Q$ is a
principal divisor, we have $Q=(f_j),$ where $f_j\in \kk[G].$
Therefore the pullback of the divisor $(F_j|_{G/H})=(i^*(F_j))$
is $(f_j^{k_j})$, that is, $(\pi^*(i^*(F_j)))=(f_j^{k_j}).$ We
may assume that $\pi^*(i^*(F_j))=f_j^{k_j}.$ In the sequel we
shall not distinguish between functions of $\kk(X)$ and their
images under the map $\pi^*\circ i^*.$ Thus, $f_j$ is an element
of $\kk(G)$ such that $f_j^{k_j}\in\kk(X)$ and
$k_jW_j=(f_j^{k_j}).$

Let us define an action $\bullet$ of the group $G$ on $H^0(X,D)$,
where $D=\sum \lambda_iE_i+\sum \mu_jW_j$,
$\lambda_i\in\mathbb{Z}$, $\mu_j=0,\ldots,k_j-1$. Suppose $g\in
G$. Then $g^{-1}\cdot D=\sum \lambda_iE_i+\sum \mu_j(g^{-1}\cdot
W_j).$ Since $k_jW_j=(f_j^{k_j})$, we obtain $k_j(g^{-1}\cdot
W_j)=g^{-1}\cdot (k_jW_j)=g^{-1}\cdot(f_j^{k_j})=(g^{-1}\cdot
f_j^{k_j})=(f_j^{k_j})+\left(\frac{g^{-1}\cdot
f_j^{k_j}}{f_j^{k_j}}\right).$ Let us consider the $H$-action on
$G$ by right shifts. Then $H$ acts on $\kk(G)$ and
$\kk(G/H)=\kk(G)^H.$ Suppose $h\in H$. Since
$f_j^{k_j}\in\kk(G/H)$, we have $h\cdot f_j=\varepsilon f_j,$
where $\varepsilon^{k_j}=1$. Then
$$h\cdot\frac{g^{-1}\cdot
f_j}{f_j}(g')=\frac{f_j(h^{-1}gg')}{f_j(h^{-1}g')}=
\frac{\varepsilon f_j(gg')}{\varepsilon f_j(g')}=\frac{
g^{-1}\cdot f_j}{ f_j}(g').$$ Hence, $\frac{g^{-1}\cdot
f_j}{f_j}\in \kk(G)^H=\kk(X).$ By definition, we put $$g\bullet
f=(g\cdot f) \prod \left(\frac{g\cdot
f_j}{f_j}\right)^{\mu_j}=g\cdot \left(\prod
\left(\frac{f_j}{g^{-1}\cdot f_j}\right)^{\mu_j}f\right),$$ where
$f\in H^0(X,D)$, and the action $\cdot$ is the standard action on
$\mathbb{K}(G)$. Let us check that $g\bullet f$ lies in
$H^0(X,D)$. We have $(f)+D\geq0.$ Hence, $\left(f\prod
\left(\frac{f_j}{g^{-1}\cdot
f_j}\right)^{\mu_j}\right)+g^{-1}\cdot D\geq0.$ Therefore,
$g\cdot\left(f\prod \left(\frac{f_j}{g^{-1}\cdot
f_j}\right)^{\mu_j}\right)+D\geq0$, that is, $g\bullet f\in
H^0(X,D).$ Let us check that $\bullet$ is an action.

\begin{multline*}
(g\widetilde{g})\bullet
f=(g\widetilde{g})\left(f\prod\left(\frac{f_j}
{(g\widetilde{g})^{-1}\cdot f_j}\right)^{\mu_j}\right)=
\\
=g\cdot\left(\widetilde{g}\cdot\left(f\prod\left(
\frac{f_j}{\widetilde{g}^{-1}\cdot f_j} \right)^{\mu_j}
\prod\left(\frac{\widetilde{g}^{-1}\cdot
f_j}{\widetilde{g}^{-1}\cdot(g^{-1}\cdot f_j)}
\right)^{\mu_j}\right)\right)=\\
=g\cdot\left((\widetilde{g}\bullet f )\left(\frac {f_j}
{g^{-1}\cdot
f_j}\right)^{\mu_j}\right)=g\bullet(\widetilde{g}\bullet f).
\end{multline*}
For $(\mu_1,\ldots,\mu_s)=(0,\ldots,0)$ the action $\bullet$
coincides with the action $\cdot$ on $\kk[X]$. Let us extend the
action $\bullet$ to the linear action on $S(X)$.

Let us prove that $\bullet$ is an action by automorphisms of
algebra $S(X)$. We need to show that $g\bullet(f*f')=(g\bullet
f)*(g\bullet f')$ for any $g\in G$ and for any $f$ and $f'$ in
$S(X)$. Since the action $\bullet$ is linear we may check only
$g\bullet(\chi^m*\chi^n)=(g\bullet \chi^m)*(g\bullet \chi^n)$,
where $\chi^m\in H^0(X,\sum a_iE_i+\sum b_jW_j)$, $\chi^n\in
H^0(X,\sum c_iE_i+\sum d_jW_j)$. By definition,
$$\chi^m*\chi^n=\chi^m\chi^n\prod F_j^{[\frac{b_j+d_j}{k_j}]}\in
H^0\left(X,\sum(a_i+c_i)E_i+\left\{\frac{b_j+d_j}{k_j}\right\}k_jW_j\right).$$
We have
\begin{multline*}
g\bullet(\chi^m*\chi^n)=\left(g\cdot\left(\chi^m\chi^n\prod
F_j^{[\frac{b_j+d_j}{k_j}]}\right)\right)\prod\left(\frac{g\cdot
f_i}{f_i}\right)^{\{\frac{b_j+d_j}{k_j}\}k_j}=\\
=\frac{(g\cdot (\chi^m \chi^n))\prod(g\cdot f_j)^{b_j+d_j}}
{\prod f_j^{\{\frac{b_j+d_j}{k_j}\}k_j}}=\\
=\left((g\cdot\chi^m)\prod\left(\frac{g\cdot
f_i}{f_i}\right)^{b_j}\right)\left((g\cdot\chi^n)\prod\left(\frac{g\cdot
f_i}{f_i}\right)^{d_j}\right)\prod
F_j^{[\frac{b_j+d_j}{k_j}]}=(g\bullet f)*(g\bullet f').
\end{multline*}

Proposition \ref{dejst} is proved.
\end{proof}

If the algebra $S(X)$ is finitely generated we can consider
$\mathrm{Spec}\,S(X)$.

\begin{lem}\label{ratz}
The action $\bullet$ on $S(X)$ induces a regular action on
$\mathrm{Spec}\,S(X)$.
\end{lem}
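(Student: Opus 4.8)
The plan is to show that the abstract action $\bullet$ of $G$ on the graded algebra $S(X)$ is \emph{rational} (locally finite and algebraic), since a rational action of an algebraic group on a finitely generated algebra induces a regular action on its spectrum. The grading of $S(X)$ by $\mathrm{C}\mathrm{l}(X)=\mathbb{Z}^n\oplus\mathbb{Z}_{k_1}\oplus\ldots\oplus\mathbb{Z}_{k_s}$ is preserved by $\bullet$ (proved in Proposition~\ref{dejst}), and each homogeneous component $H^0(X,\sum\lambda_iE_i+\sum\mu_jW_j)$ is a $G$-submodule. So it suffices to check that the $G$-action on each homogeneous component is rational and that this is compatible across components.

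First I would recall that $\kk[X]=S(X)_0$ carries a rational $G$-action, since $X$ is an affine $G$-variety; this is the classical fact that the $G$-action on the coordinate ring of an affine variety is locally finite and rational. Next, fix a homogeneous component $H^0(X,D)$ with $D=\sum\lambda_iE_i+\sum\mu_jW_j$. By the very definition of $\bullet$ we have, for $f\in H^0(X,D)$,
$$g\bullet f=g\cdot\left(\left(\prod_j\left(\frac{f_j}{g^{-1}\cdot f_j}\right)^{\mu_j}\right)f\right),$$
where $\cdot$ is the ordinary $G$-action on $\kk(G)\supset\kk(X)$. The point is to relate $H^0(X,D)$ to a genuine $G$-submodule of $\kk(X)$ or $\kk[X]$: multiplying by a suitable fixed power of the $f_j$, or rather passing through the isomorphism $\varphi$ of Lemma~\ref{Nezavis}, one reduces to the case where the twisting factors become regular. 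Concretely, I would argue that $H^0(X,D)$ embeds $G$-equivariantly (for the $\bullet$-action on the source and the $\cdot$-action on the target) into a finite-dimensional $G$-stable subspace: choose $N$ large enough that $H^0(X,D)\cdot\prod f_j^{N k_j}$ consists of functions in $\kk[X]$ lying inside a fixed $G$-invariant finite-dimensional subspace (using that $f_j^{k_j}\in\kk(X)$ and that $\kk[X]$ is a rational $G$-module). Since translation by the finitely many elements $\frac{g\cdot f_j}{f_j}\in\kk(X)$ stays within such a subspace, the map $g\mapsto(g\bullet f)$ lands in a finite-dimensional subspace of $\kk(X)$ and depends regularly on $g$, i.e.\ the $G$-action on $H^0(X,D)$ is rational.

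Having established rationality on each homogeneous component, I would note that a finite generating set of the algebra $S(X)$ can be chosen among homogeneous elements lying in finitely many components; each such generator spans a finite-dimensional $G$-stable subspace on which the action is algebraic, so the $G$-action on $S(X)$ is locally finite and rational. Consequently the dual action on $\mathrm{Spec}\,S(X)$ is regular: a surjection $\kk[y_1,\ldots,y_u]\twoheadrightarrow S(X)$ with $G$-semiinvariant generators $y_i$ yields a closed $G$-equivariant embedding $\mathrm{Spec}\,S(X)\hookrightarrow\kk^u$ with linear $G$-action, which exhibits the induced action as regular. The main obstacle I anticipate is the bookkeeping in the first reduction: the factors $\frac{f_j}{g^{-1}\cdot f_j}$ are rational, not regular, functions on $X$, so one must carefully exhibit a uniform finite-dimensional $G$-stable subspace of $\kk(X)$ containing all the twisted translates $g\bullet f$ for $f$ in a fixed component; this is where the argument has to be made precise rather than routine, using that $f_j^{k_j}$ is $G$-semiinvariant (indeed $G$-invariant since $\mathfrak{X}(G)=0$) up to the coboundary $\frac{g^{-1}\cdot f_j^{k_j}}{f_j^{k_j}}$ which is a unit-free rational function controlled by the fixed divisor $k_jW_j$.
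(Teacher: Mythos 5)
Your overall reduction coincides with the paper's: show that $\bullet$ makes each homogeneous component of $S(X)$ a rational $G$-module, conclude that $S(X)$ is a rational $G$-module, and then get a regular action on $\mathrm{Spec}\,S(X)$ via a $G$-equivariant closed embedding into a $G$-module. The gap is in the mechanism you propose for the rationality of a fixed component $H^0(X,D)$, $D=\sum\lambda_iE_i+\sum\mu_jW_j$. Multiplication by $\prod f_j^{Nk_j}$ does not intertwine $\bullet$ with the standard action: from $g\bullet f=(g\cdot f)\prod\bigl(\tfrac{g\cdot f_j}{f_j}\bigr)^{\mu_j}$ one computes
$$(g\bullet f)\prod f_j^{Nk_j}=g\cdot\Bigl(f\prod f_j^{Nk_j}\Bigr)\cdot\prod\Bigl(\frac{f_j}{g\cdot f_j}\Bigr)^{Nk_j-\mu_j},$$
and the discrepancy factor depends nontrivially on $g$, since the $W_j$ are not $G$-invariant (a $G$-invariant prime divisor must be one of the $E_i$), so $g\cdot f_j^{k_j}\neq f_j^{k_j}$. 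Moreover $H^0(X,D)\cdot\prod f_j^{Nk_j}$ is not contained in $\kk[X]$ for any $N$: powers of the $f_j$ do nothing to the poles along the $E_i$, and no nonzero multiple of $E_i$ is principal, so those poles cannot be cleared. You correctly flag this twisting problem as the obstacle, but you do not resolve it, and no choice of multiplier inside $\kk(X)$ can resolve it.

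The missing idea is to leave $\kk(X)$ and pull back to $G$ via $\kk(X)\stackrel{i^*}\hookrightarrow\kk(G/H)\stackrel{\pi^*}\hookrightarrow\kk(G)$. Since $\mathrm{Cl}(G)=0$, the pullback of $D\cap(G/H)=\sum\mu_j\,(W_j\cap(G/H))$ is the principal divisor of $J=\prod f_j^{\mu_j}\in\kk(G)$ (the $E_i$ drop out because they lie in $X\setminus(G/H)$). Then $(f)+D\geq0$ gives $fJ\in\kk[G]$, and the identity $g\bullet f=\frac{g\cdot(fJ)}{J}$ shows that $f\mapsto fJ$ is a $G$-equivariant embedding of $\bigl(H^0(X,D),\bullet\bigr)$ into $\kk[G]$ with the standard translation action. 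Rationality of $\kk[G]$ then yields rationality of every component, after which your concluding step goes through. Note that the correct multiplier is $\prod f_j^{\mu_j}$ with $0\leq\mu_j<k_j$, which exists only in $\kk(G)$ and not in $\kk(X)$; this is precisely why the argument cannot be run inside $\kk(X)$ as you attempt, and why the hypothesis that $G$ is simply connected semisimple (forcing $\mathrm{Cl}(G)=0$) enters the proof.
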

\begin{proof}
We need to prove that $S(X)$ is a rational $G$-module. Let us
prove that $H^0(X,D)$ is a rational $G$-module with respect to
the action $\bullet$. Here $D=\sum a_iE_i+\sum b_jW_j$ and the
divisors $E_i$ are $G$-invariant. Let us recall that there are
embeddings of fields
$\kk(X)\stackrel{i^*}\hookrightarrow\kk(G/H)\stackrel{\pi^*}\hookrightarrow\kk(G).$
If $f\in H^0(X,D)$, then $i^*(f)\in H^0(G/H,D\cap(G/H)).$ The
pullback of the divisor $D\cap(G/H)$ under the map $\pi$ is a
principal divisor $(J)$ in $G$. Hence we obtain $f\in
H^0(G,(J)),$ that is, $fJ\in \kk[G].$ (Recall that we do not
distinguish between $f\in\kk(X)$ and $\pi^*(i^*(f))\in\kk(G)$.)
The action $\bullet$ on $H^0(X,D)$ is defined by the formula
$$g\bullet f=g\cdot\left(f\prod\left(\frac{
f_j}{g^{-1}f_j}\right)^{b_j}\right)=\frac{g\cdot(f\prod
f_j^{b_j})}{\prod f_j^{b_j}}.$$ By definition,
$\pi^*(W_j\cap(G/H))=(f_j).$ Hence, $\pi^*(D\cap(G/H))=(\prod
f_j^{b_j}),$ that is, we may assume $J=\prod f_j^{b_j}.$ Then
$g\bullet f=\frac{g\cdot(fJ)}{J}.$ But it has been proved above
that $fJ\in \kk[G].$ It is known that $\kk[G]$ is a rational
module. Therefore, $H^0(X,D)$ is a rational $G$-module with
respect to the action~$\bullet$. Hence, $S(X)$ is a rational
$G$-module.
\end{proof}

Let $X$ be a toric variety. Then Lemmas \ref{Tinv} and
\ref{Nezavis} imply that the divisors
$E_1,\ldots,E_n,W_1,\ldots,W_s$ in the definition of $S(X)$ can
be chosen to be $T$-invariant.
\begin{lem}\label{S_Cox}
The graded algebras $S(X)$ and $\mathrm{C}\mathrm{o}\mathrm{x}(X)$
are isomorphic.
\end{lem}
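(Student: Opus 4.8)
The plan is to identify both graded algebras inside $\kk(G)$ and show they literally coincide as graded subalgebras. Recall that for a toric variety $X$ with $\kk[X]^\times=\kk^\times$ we already know from Lemma~\ref{isom} that the free part of the grading behaves as in Cox's construction; the new content here is matching the torsion part, which is exactly what the modified ring $S(X)$ of Section~\ref{podnatie} was built to handle. First I would recall that for a toric variety the divisors $E_1,\dots,E_n,W_1,\dots,W_s$ may be taken $T$-invariant (as noted just before the statement, using Lemmas~\ref{Tinv} and~\ref{Nezavis}), so every homogeneous component $H^0(X,\sum\lambda_iE_i+\sum\mu_jW_j)$ has a basis of characters $\chi^m$, $m\in P_D$, where $D=\sum\lambda_iE_i+\sum\mu_jW_j$ and $P_D=\{m\in M\mid \langle m,n_\rho\rangle\ge -a_\rho \text{ for all }\rho\}$. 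Thus each homogeneous piece of $S(X)$ is spanned by the monomials $x^{D'}$ of $\mathrm{Cox}(X)$ whose associated divisor $D'$ satisfies $[D']=[D]$ in $\mathrm{Cl}(X)$, which is precisely the homogeneous piece of $\mathrm{Cox}(X)$ of that degree.

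Next I would write down the isomorphism explicitly. Let $\rho_1,\dots,\rho_u$ enumerate $\Delta(1)$; among the $D_{\rho_i}$ we may assume the first $n$ are (representatives giving) the $E_i$ after a change of basis, and the remaining ones, together with the relations $k_jW_j=(f_j^{k_j})$, produce the torsion generators $\eta_j$. The map $\Phi\colon \mathrm{Cox}(X)\to S(X)$ sends the variable $x_\rho$ to the character $\chi^{m}$ realizing the basis element of $H^0(X,D_\rho)$ coming from $1\in\kk(X)$, i.e. $\Phi(x^D)=\chi^0=1$ viewed in $H^0(X,D)$ when $D\ge 0$ is effective $T$-invariant, and more generally $\Phi(\prod x_\rho^{a_\rho})$ is the element $\prod f_j^{\lfloor \cdot\rfloor}$-corrected product lying in the appropriate component. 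The crucial check is that $\Phi$ respects multiplication: in $\mathrm{Cox}(X)$ the product of monomials is the monomial for the sum of divisors, while in $S(X)$ the definition of $*$ inserts exactly the factors $F_j^{[(b_j+d_j)/k_j}]$ needed to land in the reduced representative $\sum\{(b_j+d_j)/k_j\}k_jW_j$; since $F_j$ restricted appropriately has divisor $k_jW_j$, these are the same data, and the computation is the same one already carried out verbatim in the proof of Lemma~\ref{Nezavis} and in the $G$-equivariance check of Proposition~\ref{dejst}.

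Concretely, I would argue: (i) $\Phi$ is well-defined and bijective on each graded component because both components are, canonically, $\bigoplus_{m\in P_D}\kk\chi^m$ with the same $P_D$; (ii) $\Phi$ is degree-preserving by construction of the $\mathrm{Cl}(X)$-grading on $\mathrm{Cox}(X)$ (degree of $x^{D}$ is $[D]$) matched against the grading on $S(X)$ (the component indexed by $(\lambda_i,\mu_j)$ sits in degree $\sum\lambda_i\xi_i+\sum\mu_j\eta_j$); (iii) $\Phi$ is multiplicative, which reduces — because both algebras are spanned over $\kk$ by these character monomials — to the identity $\chi^m*\chi^n = \chi^{m+n}\prod F_j^{[(b_j+d_j)/k_j]}$ against $x^{D_m}x^{D_n}=x^{D_m+D_n}$, and this holds since passing to the reduced representative in $S(X)$ is division by the principal divisor of $\prod F_j^{[\cdot]}$, i.e. the same bookkeeping as the $\mathrm{Cl}(X)$-grading of $\mathrm{Cox}(X)$. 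The main obstacle I anticipate is purely notational rather than mathematical: one must carefully align the chosen basis $\{\xi_i\}\cup\{\eta_j\}$ of $\mathrm{Cl}(X)$ and the divisors $E_i,W_j$ with the standard generating set $\{[D_\rho]\}$ of $\mathrm{Cl}(X)$ coming from the one-dimensional faces, and track how the $F_j$ (equivalently the $f_j^{k_j}$) express the torsion relations among the $[D_\rho]$; once this dictionary is fixed, the isomorphism and all verifications are the book-keeping identities already appearing above, so I would simply invoke them. Hence $S(X)\cong\mathrm{Cox}(X)$ as $\mathrm{Cl}(X)$-graded algebras.
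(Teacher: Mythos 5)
Your proposal is correct and follows essentially the same route as the paper: identify each component $H^0(X,\sum\lambda_iE_i+\sum\mu_jW_j)=\bigoplus_{m\in P_D}\kk\chi^m$ with the monomials $x^{D+(\chi^m)}$ of $\mathrm{Cox}(X)$ in degree $[D]$ (injectivity from $\kk[X]^\times=\kk^\times$, surjectivity from effectivity forcing $m\in P_D$), then verify multiplicativity via $(F_j)=k_jW_j$, which absorbs the correction factors $F_j^{[(b_j+d_j)/k_j]}$ exactly as in the paper's computation. The only difference is cosmetic: the paper writes the map in the direction $S(X)\to\mathrm{Cox}(X)$, $\chi^m\mapsto x^{D+(\chi^m)}$, and does not need the basis-alignment between $\{E_i,W_j\}$ and $\{D_\rho\}$ that you anticipate as an obstacle.
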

\begin{proof}
Consider a divisor $D=\sum a_iE_i+\sum b_j W_j$. Recall that
$$H^0(X,D)=\bigoplus_{m\in P_D} \mathbb{K}\chi^m.$$ Let $\psi_D$
be the linear map from $H^0(X,D)$ to
$\mathrm{C}\mathrm{o}\mathrm{x}(X)_{[D]}$ taking each $\chi^m$,
where $m\in P_D$, to $x^{D+(\chi^m)}$. Since the only invertible
functions in $\kk[X]$ are constants, different functions $\chi^m$
corresponds to different divisors $(\chi^m)$. Therefore, $\psi_D$
is an embedding. If $x^{D_1}\in
\mathrm{C}\mathrm{o}\mathrm{x}(X)_{[D]}$, then $D_1-D$ is a
principal divisor. Suppose $D=\sum \alpha_\rho D_\rho$, $D_1=\sum
\beta_\rho D_\rho$. Then $\beta_\rho-\alpha_\rho=\langle m_0,
n_\rho\rangle$ for some $m_0\in M$. But $\prod
x_\rho^{\beta_{\rho}}$ is a monomial in
$\mathrm{C}\mathrm{o}\mathrm{x}(X)$. Hence for each
$\rho\in\Delta(1)$ we have $\beta_{\rho}\geq0$. Therefore
$\langle m_0,n_\rho\rangle \geq -\alpha_\rho$, that is, $m_0$
lies in $P_D$. Consequently $\psi_D$ is a surjection. Thus
$\psi_D$ is an isomorphism of linear spaces. Let us combine all
maps $\psi_{(\sum a_iE_i+\sum b_jW_j)}, a_i\in \mathbb{Z},
b_j=0,\ldots,k_j-1$. We obtain the isomorphism of linear spaces
$\psi\colon S(X)\rightarrow \mathrm{C}\mathrm{o}\mathrm{x}(X).$
Let us prove that $\psi$ is an isomorphism of algebras. We need
to check that $\psi(f*g)=\psi(f)\psi(g)$ for any $f$ and $g$ in
$S(X)$. By distributivity we may assume that $f=\chi^m\in
H^0(X,\sum a_iE_i+\sum b_jW_j)$ and $g=\chi^n\in H^0(X,\sum
c_iE_i+\sum d_jW_j)$. Then by the definitions of the
multiplication $*$ and of the isomorphism $\psi$ we obtain
\begin{multline*}
\psi(f*g)=\psi(\chi^m*\chi^n)=\psi\left(\chi^{m+n}\prod F_j^
{[\frac{b_j+d_j}{k_j}]}\right)=\\
=x^{\sum(a_i+c_i)E_i+\sum\{\frac{b_j+d_j}{k_j}\}k_jW_j+(\chi^m)+
(\chi^n)+\sum[\frac{b_j+d_j}{k_j}](F_j)}=\\
=x^{\sum(a_i+c_i)E_i+\sum\{\frac{b_j+d_j}{k_j}\}k_jW_j+(\chi^m)+
(\chi^n)+\sum[\frac{b_j+d_j}{k_j}]k_jW_j}=\\
=x^{\sum(a_i+c_i)E_i+\sum(b_j+d_j)W_j+(\chi^m)+(\chi^n)}=\\
=x^{\sum a_iE_i+\sum b_jW_j+(\chi^m)}x^{\sum c_iE_i+\sum
d_jW_j+(\chi^n)}= \psi(\chi^m)\psi(\chi^n)=\psi(f)\psi(g).
\end{multline*}
\end{proof}

\section {A necessary and sufficient condition for an affine $G/H$-embedding to be toric.}
\label{krit}

\begin{theor}\label{GH}Let $G$ be a semisimple simply connected group acting on a variety
$X$. Suppose $H$ is a closed subgroup in $G$ such that the group
of characters $\mathfrak{X}(H)$ is finite. If there is an open
equivariant embedding $G/H\hookrightarrow X$, then the following
conditions are equivalent:

1) X is a toric variety;

2) There exist a $G$-module $V$ and a linear action of a
quasitorus $\widehat{T}$ on $V$ commuting with the $G$-action such
that $X\stackrel{G}{\cong}V/\!\!/\widehat{T}$;

3) The algebra $S(X)$ is free.

\end{theor}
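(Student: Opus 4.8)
The plan is to prove the cycle $1)\Rightarrow 3)\Rightarrow 2)\Rightarrow 1)$, reusing almost all the machinery of Section~\ref{podnatie} and adapting the three implications of Proposition~\ref{G} to the situation where $\mathfrak X(H)$ is merely finite rather than trivial.

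\textbf{Implication $1)\Rightarrow 3)$.} Suppose $X$ is toric. Then $\kk[X]^\times=\kk^\times$ by Lemma~\ref{const} (a semisimple group acts locally transitively on $X$, since $G/H$ is open and $G$ is semisimple). By Lemma~\ref{S_Cox} the graded algebra $S(X)$ is isomorphic to $\mathrm{Cox}(X)$, which by definition is a polynomial ring. Hence $S(X)$ is free. (This is the analogue of Corollary~\ref{cl}, now valid even with torsion in $\mathrm{Cl}(X)$ because Lemma~\ref{S_Cox} handles the torsion.)

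\textbf{Implication $3)\Rightarrow 2)$.} Assume $S(X)$ is free, so $\mathrm{Spec}\,S(X)\cong\kk^u$. By Proposition~\ref{dejst} the group $G$ acts on $S(X)$ preserving the $\mathrm{Cl}(X)$-grading and restricting to the given action on $S(X)_0\cong\kk[X]$; by Lemma~\ref{ratz} this action is regular on $\mathrm{Spec}\,S(X)$. Next I build the quasitorus: let $\widehat T$ be the group of characters of $\mathrm{Cl}(X)$ — concretely $\widehat T\cong(\kk^\times)^n\times\mathbb Z_{k_1}\times\dots\times\mathbb Z_{k_s}$ — acting on the homogeneous component of $S(X)$ of degree $\alpha\in\mathrm{Cl}(X)$ by the character $\alpha$. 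This action is diagonal, hence regular and linear, and it commutes with the $G$-action because $G$ preserves homogeneous components. One has $S(X)^{\widehat T}=S(X)_0\cong\kk[X]$, and since $X$ has a dense $G$-orbit, $\kk[X]^G=\kk$; therefore $(S(X))^{G\times\widehat T}=\kk$. Now I invoke Kraft--Popov~\cite{kp}: a regular action of the reductive group $G\times\widehat T$ on $\kk^u$ with only constant invariants is linearizable, so $V:=\mathrm{Spec}\,S(X)$ is a $(G\times\widehat T)$-module. Taking the quotient by $\widehat T$ gives $V/\!\!/\widehat T=\mathrm{Spec}\,S(X)^{\widehat T}=\mathrm{Spec}\,\kk[X]\stackrel{G}{\cong} X$.

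\textbf{Implication $2)\Rightarrow 1)$.} This is essentially verbatim the argument in Proposition~\ref{G}: since $\widehat T$ acts linearly on $V$, its unit component is a torus whose operators are simultaneously diagonalizable, and the finite part permutes nothing once we pass to a common eigenbasis — more precisely, a quasitorus acting linearly on $V$ is contained in the maximal torus $\overline T$ of diagonal operators in a suitable basis. Then $T:=\overline T/\widehat T$ is a torus acting on $V/\!\!/\widehat T$ with a dense orbit (the image of the dense $\overline T$-orbit on $V$), so $X$ is toric.

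\textbf{Main obstacle.} The delicate points are all in $3)\Rightarrow 2)$: one must be sure that the $G$-action of Proposition~\ref{dejst} and the grading $\widehat T$-action genuinely commute and are both regular on the \emph{same} copy of $\kk^u$, and that the hypotheses of the Kraft--Popov linearization theorem are met — in particular that $G\times\widehat T$ is reductive (clear) and that the only invariants are constants, which reduces via $S(X)^{\widehat T}=\kk[X]$ to $\kk[X]^G=\kk$, i.e.\ to the existence of a dense $G$-orbit. Verifying that $\widehat T$ (with its torsion factors) still yields $X$ as the categorical quotient requires the compatibility of $*$-multiplication with the grading established in Section~\ref{podnatie}; this is exactly what Lemmas~\ref{Nezavis}, \ref{ratz} and~\ref{S_Cox} were set up to provide.
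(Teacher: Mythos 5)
Your proposal is correct and follows essentially the same route as the paper: $1)\Rightarrow 3)$ via Lemma \ref{S_Cox}, $3)\Rightarrow 2)$ by combining Proposition \ref{dejst} and Lemma \ref{ratz} with the grading action of the quasitorus $\widehat{T}\cong(\kk^\times)^n\times\mathbb{Z}_{k_1}\times\dots\times\mathbb{Z}_{k_s}$ and the Kraft--Popov linearization theorem, and $2)\Rightarrow 1)$ exactly as in Proposition \ref{G}. The only cosmetic difference is your description of $\widehat{T}$ as the character group of $\mathrm{Cl}(X)$, which is the same group the paper writes down explicitly.
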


\begin{proof}
The implication {\it 1) $\Rightarrow$ 3)} follows from Lemma
\ref{S_Cox}.

{\it 3)$\Rightarrow$ 2)} The algebra $S(X)$ is graded by the
group $\mathrm{C}\mathrm{l}(X)\cong
\mathbb{Z}^n\oplus\mathbb{Z}_{k_1}\oplus\ldots\oplus\mathbb{Z}_{k_s}$.
Proposition \ref{dejst} implies that there exists a $G$-action by
automorphisms of the ring $S(X)$ that preserves homogeneous
components and coincides on $S(X)_{(0,\ldots,0)}\cong
\mathbb{K}[X]$ with the standard $G$-action. Consider a
quasitorus $\widehat{T}=\widetilde{T}\times
\mathbb{Z}_{k_1}\times\ldots\times\mathbb{Z}_{k_s}$, where
$\widetilde{T}$ is an $n$-dimensional torus. Let us define a
$\widehat{T}$-action on $S(X)$. Suppose $f\in
S(X)_{(\lambda_1,\ldots,\lambda_n,\mu_1,\ldots,\mu_s)}$, where
$\lambda_i\in \mathbb{Z}$, $\mu_j=0,1,\ldots,k_j-1$. Let
$t=(t_1,\ldots,t_n)\in \widetilde{T}$, $\varepsilon_j\in
\mathbb{Z}_{k_j}\subset \mathbb{K}^*$. Then
$(t,\varepsilon_1,\ldots,\varepsilon_s)\cdot f=\prod
t_i^{\lambda_i}\prod \varepsilon_j^{\mu_j} f.$ Obviously, this is
an action. Moreover, this action commute with $G$-action, because
$G$ preserves homogeneous components in $S(X)$ and every element
of $\widehat{T}$ acts on each component by multiplying by a
constant. Hence $G\times\widehat{T}$ acts on $S(X)$. Lemma
\ref{ratz} implies that $G\times\widehat{T}$ acts on
$V=\mathrm{Spec}\,S(X)$. Since $S(X)$ is a free algebra, $V$ is a
linear space. We have $S(X)^{G\times\widehat{T}}
=(S(X)^{\widehat{T}}) ^{G}
=S(X)_{(0,\ldots,0)}^{G}=\mathbb{K}[X]^{G}=\mathbb{K}$. The last
equality holds because $G$ acts on $X$ with an open orbit. The
Kraft-Popov theorem \cite{kp} implies that the action
$G\times\widehat{T}\colon V$ is equivalent to an linear action.
Note that $S(X)^{\widehat{T}}= S(X)_{(0,\ldots,0)}$. Moreover,
since $G$-action on $S(X)_{(0,\ldots,0)}\cong \mathbb{K}[X]$
coincides with the standard action, we have
$S(X)^{\widehat{T}}=S(X)_{(0,\ldots,0)}\stackrel{G}\cong
\mathbb{K}[X]$. Hence $V/\!\!/\widehat{T}\stackrel{G}\cong X$.

{\it 2) $\Rightarrow$ 1)} The proof coincides with the proof of
the corresponding implication in Proposition \ref{G}.
\end{proof}

\begin{rem}\label{zam}
It follows from the proofs of Proposition \ref{dejst} and Theorem
\ref{GH} that $n$ is the number of prime divisors in the
complement to $G/H$ and $s$ is not larger then the number of
cyclic components in any splitting of $\mathfrak X(H)$.
\end{rem}

\section{Necessary information on
$\mathrm{SL}(2)/\mathbb{Z}_r$\,-\,embeddings} The results
mentioned in this section can be found in \cite{p1} (see also
\cite[chapter 3]{kr}).

A normal affine $\mathrm{SL}(2)$\,-\,embedding $X$ is unique up
to an isomorphism determined by its height. The height is a
rational number defined as follows. Let us consider an
$\mathrm{SL}(2)$\,-\,equivariant open embedding $\varphi\colon
\mathrm{SL}(2)\hookrightarrow X$. It induces an embedding
$\varphi^*\colon\mathbb{K}[X]\hookrightarrow
\mathbb{K}[\mathrm{SL}(2)]
=\mathbb{K}[\alpha,\beta,\gamma,\delta]/(\alpha\delta-\beta\gamma-1).$
Here $\alpha,\beta,\gamma$ and $\delta$ are the functions such
that $\alpha(A)=a_{11},\,\beta(A)=a_{12},\,\gamma(A)=a_{21}
,\,\delta(A)=a_{22}$ for any
$A=\begin{pmatrix}a_{11}&&a_{12}\\a_{21}&&a_{22}\end{pmatrix}\in
SL(2)$. Let us consider the unipotent subgroup $U=\left\{\begin{pmatrix} 1&0\\
*&1\end{pmatrix}\right\}$ in $\mathrm{SL}(2)$. U acts on
$\mathrm{SL}(2)$ by left shifts. The algebra of $U$-invariant
functions on $\mathrm{SL}(2)$ is
$\mathbb{K}[\SL(2)]^U=\mathbb{K}[\alpha,\beta]$. Let us consider
the restriction of $\varphi^*$ to $\mathbb{K}[X]^U$. We have
$\varphi^*:\mathbb{K}[X]^U\hookrightarrow \mathbb{K}[\SL(2)]^U
=\mathbb{K}[\alpha,\beta]$.

\begin{prop} The image $\varphi^*(\kk[X]^U)$ is a monomial
subalgebra in $\kk[\alpha,\beta]$. Moreover,
$\varphi^*(\kk[X]^U)=\langle\alpha^i\beta^j \mid j/i\leq
h\rangle$ for some rational number $h$.
\end{prop}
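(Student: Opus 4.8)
The plan is to analyze the ring $\kk[X]^U$ inside $\kk[\SL(2)]^U = \kk[\alpha,\beta]$ using the residual torus action. First I would note that the maximal torus $T \subset \SL(2)$ of diagonal matrices normalizes $U$, hence acts on $\kk[\SL(2)]^U = \kk[\alpha,\beta]$; with the convention fixing $\alpha,\beta$ as the matrix entries $a_{11},a_{12}$, the torus acts diagonally, scaling $\alpha$ and $\beta$ by characters. Thus $\kk[\alpha,\beta]$ is $\mathbb{Z}^2$-graded with $\alpha$ and $\beta$ sitting in distinct one-dimensional weight spaces, and every $T$-stable subalgebra is automatically generated by monomials $\alpha^i\beta^j$. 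Since $\SL(2)$ acts on $X$ and the embedding $\varphi$ is equivariant, $\varphi^*(\kk[X])$ is an $\SL(2)$-stable subalgebra of $\kk[\SL(2)]$, so $\varphi^*(\kk[X]^U) = \varphi^*(\kk[X])^U$ is a $T$-stable (indeed $B$-stable, where $B = TU$) subalgebra of $\kk[\alpha,\beta]$. This gives the first assertion: it is a monomial subalgebra.

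Next I would identify which set of monomials occurs. Let $S = \{(i,j) \in \mathbb{Z}_{\ge 0}^2 : \alpha^i\beta^j \in \varphi^*(\kk[X]^U)\}$. Because $\varphi^*(\kk[X]^U)$ is a subalgebra, $S$ is an additive subsemigroup of $\mathbb{Z}_{\ge 0}^2$ containing $(0,0)$. The key input is that $X$ is a normal affine three-dimensional variety and $\SL(2)/\mathbb{Z}_r \hookrightarrow X$ is dense; normality of $X$ forces $\kk[X]$, hence $\kk[X]^U$, to be integrally closed in its field of fractions. Translating through $\varphi^*$, the semigroup $S$ must be saturated: if $(ni,nj) \in S$ for some $n \ge 1$ then $(i,j) \in S$. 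A saturated subsemigroup of $\mathbb{Z}_{\ge 0}^2$ is exactly the set of lattice points in a rational convex cone in the first quadrant. The remaining point is to pin down which cone: since the fraction field of $\kk[X]^U$ has transcendence degree two (as $\dim X = 3$ and $\dim U = 1$), the cone must be two-dimensional, and since $S \subseteq \mathbb{Z}_{\ge 0}^2$ with $\alpha$ itself lying in $\kk[\SL(2)]^U$ coming from a function on $X$ (one checks $\alpha \in \varphi^*(\kk[X]^U)$, as the lowest-weight vector in the appropriate $\SL(2)$-submodule restricts nontrivially), one edge of the cone is the $i$-axis. Hence the cone is $\{(i,j) : 0 \le j/i \le h\}$ for a single rational slope $h$ (allowing $h = \infty$ a priori, but finiteness of $\kk[X]$ rules that out, forcing $h < \infty$; normalization of $h$ to lie in $(0,1]$ is Popov's additional choice of which of $X$ or its "reflection" one takes).

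The main obstacle I anticipate is justifying that one full edge of the cone $S$ is the coordinate axis $\{j = 0\}$ — equivalently, that $\alpha^i \in \varphi^*(\kk[X]^U)$ for all sufficiently divisible $i$, and more precisely that $S$ is genuinely two-dimensional rather than a half-line. This is where one uses that the $\SL(2)$-action on $X$ is locally transitive (so the image $\varphi^*(\kk[X])$ is "large": it separates a dense set and its fraction field is all of $\kk(\SL(2))$) together with the highest-weight theory for $\SL(2)$-modules: $\kk[\SL(2)]$ decomposes as $\bigoplus_{d \ge 0} V_d \otimes V_d^*$ with the $U$-invariants in the $d$-th isotypic piece spanned by $\alpha^i\beta^{d-i}$, $0 \le i \le d$, and one argues that $\varphi^*(\kk[X])$ contains a nonzero lowest-$U$-weight vector in infinitely many of these pieces, the lowest weight vector in $V_d \otimes V_d^*$ being (a scalar multiple of) $\alpha^d$. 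Once that is in hand, the rest is the standard dictionary between saturated affine semigroups and rational polyhedral cones, and the proposition follows.
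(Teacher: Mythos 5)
The paper itself does not prove this proposition; it is quoted from Popov \cite{p1} (see also \cite[Ch.~III, \S 4]{kr}), so your attempt must stand on its own. It has a genuine gap at its first and most important step: the claim that the residual action of the diagonal torus $T$ on $\kk[\SL(2)]^U=\kk[\alpha,\beta]$ puts $\alpha$ and $\beta$ into distinct weight spaces. It does not. Here $U$ acts by \emph{left} shifts, so the copy of $T$ normalizing $U$ also acts by left shifts, and left multiplication by $\mathrm{diag}(s,s^{-1})$ rescales the entire first row of a matrix by the same factor $s^{-1}$; hence $t\cdot\alpha=s^{-1}\alpha$ and $t\cdot\beta=s^{-1}\beta$ carry the \emph{same} character. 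The $T$-weight spaces of $\kk[\alpha,\beta]$ are therefore the full homogeneous components $\langle\alpha^i\beta^{d-i}\mid 0\le i\le d\rangle$ (exactly the $U$-invariants of the isotypic component of type $R_d$), and left $B$-stability of $\varphi^*(\kk[X]^U)$ yields only the grading by total degree: for instance $\kk[\alpha+\beta]$ is a left-$B$-stable subalgebra of $\kk[\alpha,\beta]$ that is not monomial. The grading that does separate $\alpha$ from $\beta$ is the one coming from the torus acting by \emph{right} shifts ($\alpha\mapsto s\alpha$, $\beta\mapsto s^{-1}\beta$), but $\varphi$ is equivariant only for the left action, so stability of $\varphi^*(\kk[X])$ under right translations is not available for free; the fact that a two-dimensional torus (left times right) acts on every $\SL(2)/\mathbb{Z}_r$-embedding is itself a nontrivial theorem (\cite[Ch.~III, 4.8]{kr}) essentially equivalent to the monomiality you are trying to establish. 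Popov's actual argument exploits normality more seriously: $\kk[X]=\kk[\SL(2)]\cap\bigcap_v\mathcal{O}_v$, where $v$ runs over the $\SL(2)$-invariant divisorial valuations of the boundary, and one shows that such valuations restrict to monomial valuations on $\kk[\alpha,\beta]$, which is what forces the exponent set to be the lattice points of a cone.

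The remaining steps are a mix of sound and unproved. The saturation of the exponent semigroup from normality of $X$ (via $t^n-\alpha^{ni}\beta^{nj}=0$ and $\kk(X)=\kk(\SL(2))$) is correct, as is the two-dimensionality of the cone, \emph{granted} monomiality. But the assertion that one edge of the cone is the $i$-axis is exactly where the real work lies, and your justification (``the lowest weight vector in $R_d\otimes R_d^*$ is $\alpha^d$ and restricts nontrivially'') is not an argument: the $U$-invariants of that isotypic component form a $(d+1)$-dimensional space, and deciding which line of it lies in $\varphi^*(\kk[X])$ is the content of the classification. Proving that $\alpha\in\varphi^*(\kk[X])$, and that $h$ may be normalized to lie in $(0,1]$, requires the analysis of invariant valuations (or of the $\SL(2)$-module generated by $\kk[X]^U$) that your sketch postpones.
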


The rational number $h$ is called the {\it height} of the
embedding $X$.

For any $\SL(2)/\mathbb{Z}_r$-embedding $X$ there exists a unique
up to an isomorphism $\SL(2)$-embedding $Y$ such that $X\cong
Y/\!\!/\mathbb{Z}_r$. Here the $\mathbb{Z}_r$-action on $Y$ is an
extension of the $\mathbb{Z}_r\subset \SL(2)$ on $\SL(2)$ by
right shifts. The height of the corresponding $\SL(2)$-embedding
$Y$ is called the {\it height} of the
$\SL(2)/\mathbb{Z}_r$-embedding $X$. The order of the stabilizer
of a point in the open orbit is called the {\it degree} of $X$.
Thus, any normal affine $\SL(2)/\mathbb{Z}_r$-embedding is unique
up to isomorphism determined by its height
$h\in\mathbb{Q}\cap(0,1]$ and degree $r\in\mathbb{N}$.

\begin{prop}
There is a unique prime divisor in the complement of the open
orbit in a normal affine $\SL(2)/\mathbb{Z}_r$-embedding.
\end{prop}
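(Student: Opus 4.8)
The plan is to analyze the complement $X\setminus(\mathrm{SL}(2)/\mathbb{Z}_r)$ directly, using the concrete description of $\mathrm{SL}(2)/\mathbb{Z}_r$-embeddings via the height. First I would note that any prime divisor $E$ in the complement must be $\mathrm{SL}(2)$-invariant, since $\mathrm{SL}(2)$ is connected; hence its ideal in $\kk[X]$ is an $\mathrm{SL}(2)$-submodule, and in particular $E\cap(\mathrm{SL}(2)/\mathbb{Z}_r)=\emptyset$. So it suffices to count the $\mathrm{SL}(2)$-orbits of codimension one in $X$, or equivalently, to understand the image of $\kk[X]$ in $\kk[\mathrm{SL}(2)]$ closely enough to see how many height-one components the boundary carries.

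The cleanest route is to reduce to the case $r=1$. By the preceding discussion $X\cong Y/\!\!/\mathbb{Z}_r$ for the $\mathrm{SL}(2)$-embedding $Y$ of the same height, with $\mathbb{Z}_r$ acting by right translations. The quotient morphism $Y\to X$ is finite and surjective, equivariant for $\mathrm{SL}(2)$ acting by left translations, and restricts to $\mathrm{SL}(2)\to\mathrm{SL}(2)/\mathbb{Z}_r$ on the open orbits. Thus prime divisors in $X\setminus(\mathrm{SL}(2)/\mathbb{Z}_r)$ are exactly the images of prime divisors in $Y\setminus\mathrm{SL}(2)$, and since the boundary divisors are $\mathrm{SL}(2)$-invariant (hence a single $\mathrm{SL}(2)$-orbit closure each) the number of them is unchanged under a finite equivariant map that is an isomorphism over the open orbit. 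So it is enough to prove the statement for $Y$, i.e. for an $\mathrm{SL}(2)$-embedding of height $h=p/q$.

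For $Y$ itself I would use the explicit model. Write $h=p/q$ with $(p,q)=1$; then $\kk[Y]^U=\langle\alpha^i\beta^j\mid j/i\le p/q\rangle$, whose associated cone in the $(i,j)$-plane is bounded by the rays $i\ge 0$ (the $\alpha$-axis) and $qj\le pi$. The boundary $Y\setminus\mathrm{SL}(2)$ is the zero set of $\alpha$ restricted to $Y$ — more precisely, of the function corresponding to the extremal monomial $\alpha$ — and one checks that this vanishing locus is irreducible: its defining prime ideal is the $\mathrm{SL}(2)$-submodule generated by the boundary of the $U$-invariant cone, and there is exactly one extremal ray other than the free one generating it. Concretely, the complement consists of points where the whole ``$\alpha$-column'' of functions vanishes, and by normality of $Y$ together with the structure of the semigroup algebra $\kk[Y]^U$ this is a single prime divisor; a second boundary divisor would force a second bounding ray of the cone with a rational slope $\le h$, contradicting that the cone is the full sector $0\le j/i\le h$.

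The main obstacle I expect is the irreducibility claim: showing that the (reduced) complement of the open orbit is a single prime divisor rather than a union of several $\mathrm{SL}(2)$-orbit closures. One must rule out that the extremal monomial $\alpha$ cuts out a reducible or non-reduced subscheme. I would handle this by passing to $U$-invariants — where $\kk[Y]^U$ is a concretely presented affine semigroup algebra and the localization $\kk[Y]^U_\alpha$ equals $\kk[\alpha^{\pm1},\beta]$, so $(\alpha)$ is prime in $\kk[Y]^U$ — and then invoking that for a normal $\mathrm{SL}(2)$-variety a $\mathrm{SL}(2)$-stable divisor is determined by its intersection with $U$-invariants, since $\kk[Y]$ is a flat (in fact free, by complete reducibility) module structure over $\kk[Y]^U$ in the graded sense used in Popov's classification. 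That turns the geometric statement into the bookkeeping of a single affine semigroup, which is routine once set up.
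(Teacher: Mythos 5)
The paper offers no proof of this proposition at all --- it is quoted from Popov \cite{p1} (see the opening line of that section), where it is established via an explicit analysis of the orbit structure: the complement of the open orbit is the closure of a single two-dimensional orbit. So your argument is necessarily your own, and while the reduction to $r=1$ via the finite equivariant quotient $Y\to Y/\!\!/\mathbb{Z}_r$ is sound (each boundary prime divisor of $X$ is the image of one of $Y$, and images of the $G$-invariant boundary divisors of $Y$ are again prime divisors), the core step for $Y$ itself has a genuine error. The boundary $Y\setminus\SL(2)$ is \emph{not} the zero set of $\alpha$: the function $\alpha=a_{11}$ already vanishes on a nonempty divisor of $\SL(2)$ itself, so its zero locus in $Y$ always meets the open orbit. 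Concretely, in the model $Y=\{v_1^{\otimes q}\otimes v_2^{\otimes p}\}$ with $q-p=1$ one computes that $\alpha$ corresponds to $x_1(v_1)\cdot\det(v_1\,|\,v_2)$ (up to the identification of $V_1$ with $V_2$); the factor $\det$ cuts out the boundary, while $x_1$ cuts out a $B$-stable divisor meeting the open orbit. Relatedly, $(\alpha)$ need not be prime in $\kk[Y]^U$: for height $2/3$ one has $\kk[Y]^U\cong\kk[u,v,w]/(v^2-uw)$ with $u=\alpha$, $v=\alpha^2\beta$, $w=\alpha^3\beta^2$, and $\kk[Y]^U/(u)\cong\kk[v,w]/(v^2)$ is not even reduced. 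The inference ``$\kk[Y]^U_\alpha=\kk[\alpha^{\pm1},\beta]$, so $(\alpha)$ is prime'' is a non sequitur, since inverting $\alpha$ tells you nothing about the quotient by $(\alpha)$.

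The deeper issue is that your counting principle, taken literally, proves the wrong statement. The cone of the semigroup $\{(i,j):qj\le pi\}$ has \emph{two} extremal rays, $(1,0)$ and $(q,p)$, both of slope $\le h$, so ``boundary divisors correspond to bounding rays'' would yield two boundary divisors. The actual content of the proposition is precisely that only the ray $(q,p)$ comes from a $G$-stable divisor (one checks that the $U$-invariants of the boundary ideal are $\langle\alpha^i\beta^j: qj<pi\rangle$, the monomial prime attached to the facet $(q,p)$), whereas the ray $(1,0)$ corresponds to a $B$-stable divisor meeting the open orbit. You exclude $(1,0)$ by calling it ``the free one'' but give no reason, and you simultaneously claim the boundary is cut out by $\alpha$, which is the generator of the wrong ray. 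Finally, the assertion that $\kk[Y]$ is free or flat over $\kk[Y]^U$ ``by complete reducibility'' is unsubstantiated; what complete reducibility does give is that a $G$-stable ideal $I$ is generated as a $G$-module by $I^U$, whence $I\mapsto I^U$ is injective on $G$-stable ideals --- that is the correct tool here, but you would still have to determine which $T\times T$-stable primes of $\kk[Y]^U$ actually arise from $G$-stable divisors, and that is exactly the point the proposal skips.
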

The divisor class group of a normal affine
$\SL(2)/\mathbb{Z}_r$-embedding has been calculated in
\cite{pan}. It is isomorphic to $\mathbb{Z}\oplus\mathbb{Z}_l,$
where $l=\frac{r}{(r,q-p)}.$

\section {Classification of three-dimensional
toric varieties admitting a locally transitive $\SL(2)$-action}
\label{class}
\begin{prop}
Let $G$ be a semisimple simply connected  group and $H$ be a
closed subgroup of $G$. Assume that $\mathfrak{X}(H)$ is finite.
If the homogeneous space $G/H$ has positive dimension, then $G/H$
is not a toric variety.
\end{prop}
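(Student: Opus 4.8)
The statement to prove is that if $G$ is semisimple simply connected, $H\subset G$ is closed with $\mathfrak{X}(H)$ finite, and $\dim G/H>0$, then $G/H$ is not toric. The natural strategy is to argue by contradiction using the structural facts about $\mathrm{Cl}(G/H)$ and invertible functions that have already been established, together with the characterization of affine toric varieties. Suppose $G/H$ were toric. A homogeneous space $G/H$ is smooth, and if it is affine and toric it carries a locally transitive torus action, so it is an affine toric variety; being smooth it must be isomorphic (as a toric variety) to a product $T'\times\mathbb{K}^m$ for some torus $T'$ and some $m\ge 0$ (a smooth affine toric variety is the product of a torus and an affine space — this is the standard structure theory referenced via \cite{ful}).

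The plan is then to extract a contradiction from two invariants. First, by Lemma~\ref{const}, since $G$ is semisimple and acts locally transitively (indeed transitively) on $G/H$, there are no non-constant invertible functions on $G/H$; but $T'\times\mathbb{K}^m$ has $\mathbb{K}[T'\times\mathbb{K}^m]^\times/\mathbb{K}^\times\cong\mathfrak{X}(T')$, so $T'$ must be trivial and hence $G/H\cong\mathbb{K}^m$ with $m=\dim G/H>0$. Second, I would compute $\mathrm{Cl}$: on one hand $\mathrm{Cl}(\mathbb{K}^m)=0$, and on the other hand the excerpt already records $\mathrm{Cl}(G/H)\cong\mathfrak{X}(H)$ (from \cite[Prop.~1]{p2}, \cite[Th.~4]{p2}), so this alone gives no contradiction. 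The real obstruction has to come from looking more closely at $\mathbb{K}^m$ as a $G$-variety.

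So the key step is: if $G/H\cong\mathbb{K}^m$ with $m>0$, then $G$ acts on $\mathbb{K}^m$ with a dense orbit, and in fact transitively (the orbit $G/H$ is the whole space). I would invoke the Kraft--Popov linearization theorem \cite{kp} — exactly the tool already used in the paper — to replace this $G$-action by a \emph{linear} action of the reductive group $G$ on $\mathbb{K}^m$. A linear action always fixes the origin $0\in\mathbb{K}^m$, so $\{0\}$ is a closed $G$-orbit; but a transitive $G$-action on $\mathbb{K}^m$ has only one orbit, which would force $\mathbb{K}^m=\{0\}$, i.e. $m=0$, contradicting $\dim G/H>0$. This gives the desired contradiction and proves $G/H$ is not toric.

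\textbf{Main obstacle.} The step I expect to require the most care is justifying that a smooth affine toric $G/H$ is forced to be $\mathbb{K}^m$ and, more importantly, that one may legitimately apply the linearization theorem here: one needs $\mathbb{K}[G/H]^G=\mathbb{K}$ (true, since the $G$-action has a dense orbit) so that \cite{kp} applies, and one must be careful that "linearizable" gives a genuine fixed point. An alternative, avoiding linearization, would be to note that the only homogeneous spaces of a semisimple group that are isomorphic to affine space are points (since a positive-dimensional $G/H\cong\mathbb{K}^m$ would have $\mathbb{K}^m$ homogeneous under an algebraic group action with reductive isotropy giving a transitive, hence in particular free-at-infinity, action — and affine space has no such structure for semisimple $G$ because the boundary behavior of the $G$-action contradicts homogeneity). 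I would present the linearization argument as the clean route, flagging the verification of the hypotheses of \cite{kp} as the point needing attention.
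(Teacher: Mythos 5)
Your proof is correct, but it takes a genuinely different route from the paper's. The paper deduces the proposition directly from Theorem \ref{GH}: taking $X=G/H$, that theorem yields $X\stackrel{G}{\cong}V/\!\!/L$ with $V$ a vector space and $L$ acting linearly, and the image of $0\in V$ under the quotient morphism is a $G$-fixed point in $X$, contradicting transitivity. You bypass the total-coordinate-ring machinery entirely: you use smoothness of the homogeneous space and the structure theorem for smooth affine toric varieties to get $G/H\cong T'\times\mathbb{K}^m$, kill $T'$ with Lemma \ref{const}, and then apply the Kraft--Popov theorem \cite{kp} (in exactly the form the paper itself uses it: a reductive group action on $\mathbb{K}^u$ with only constant invariants is linearizable) to produce a $G$-fixed point in $\mathbb{K}^m$; the terminal contradiction with transitivity is the same as the paper's. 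Your version is more economical in hypotheses --- it never uses that $\mathfrak{X}(H)$ is finite or that $G$ is simply connected, both of which the paper needs in order to invoke Theorem \ref{GH} --- at the cost of importing the classification of smooth affine toric varieties, which the paper does not otherwise use. Two small remarks: both arguments tacitly rely on the paper's standing convention that all varieties are affine (without it the statement fails, e.g.\ $\mathrm{SL}(2)/U\cong\mathbb{K}^2\setminus\{0\}$ is a positive-dimensional toric homogeneous space), and your phrase ``if it is affine and toric'' correctly acknowledges this; and your sketched ``alternative avoiding linearization'' at the end is not rigorous as stated, but your main argument does not depend on it.
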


\begin{proof}
Let $X=G/H$ be a toric variety of positive dimension. It follows
from Theorem \ref{GH} that $X\stackrel{G}\cong V/\!\!/L,$ where
$V$ is a vector space and $L$ is a group acting linearly on $V$.
Hence the image of $0\in V$ under the factorisation morphism is
an $G$-stable point. This contradicts the transitivity of the
$G$-action on $X$.
\end{proof}
\begin{corollary}\label{new}
No three-dimensional toric variety admits a transitive
$\mathrm{SL}(2)$-action.
\end{corollary}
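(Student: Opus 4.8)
The plan is to deduce Corollary~\ref{new} immediately from the preceding Proposition. A transitive $\mathrm{SL}(2)$-action on a three-dimensional variety $X$ presents $X$ as a homogeneous space $\mathrm{SL}(2)/H$, where $H$ is the stabilizer of a point. Since $\dim X = 3 = \dim \mathrm{SL}(2)$, the subgroup $H$ is finite, hence in particular the character group $\mathfrak{X}(H)$ is finite (it is a finite group), and $\dim \mathrm{SL}(2)/H = 3 > 0$. Thus $G = \mathrm{SL}(2)$ (which is semisimple and simply connected) and $H$ satisfy all the hypotheses of the Proposition, and the Proposition directly yields that $\mathrm{SL}(2)/H$ is not toric. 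Therefore no three-dimensional toric variety can carry a transitive $\mathrm{SL}(2)$-action.

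The only point that needs a word of care is the finiteness of $H$: a priori a stabilizer of a transitive action on an affine variety need only be closed, but here the dimension count forces $\dim H = \dim \mathrm{SL}(2) - \dim X = 0$, and a zero-dimensional closed subgroup of an algebraic group is finite. The rest is a verbatim application of the Proposition, so there is essentially no obstacle; the corollary is a one-line specialization. I would phrase the proof as follows.

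\begin{proof}
Suppose a three-dimensional variety $X$ admits a transitive $\mathrm{SL}(2)$-action, and let $H$ be the stabilizer of a point. Then $X\cong \mathrm{SL}(2)/H$ and $\dim H=\dim\mathrm{SL}(2)-\dim X=0$, so $H$ is a finite subgroup of $\mathrm{SL}(2)$; in particular $\mathfrak{X}(H)$ is finite. Since $\dim\mathrm{SL}(2)/H=3>0$, the previous Proposition (applied with $G=\mathrm{SL}(2)$) shows that $\mathrm{SL}(2)/H$ is not toric. Hence $X$ is not toric.
\end{proof}

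Since this argument is complete and uses only the Proposition just proved, there is no real difficulty to overcome; the statement is recorded as a corollary precisely because it is the direct three-dimensional $\mathrm{SL}(2)$ instance of the general fact. The only alternative phrasing worth considering would be to invoke the fact from Section~\ref{class}'s introduction that a locally transitive but not transitive $\mathrm{SL}(2)$-action has finite cyclic generic stabilizer, but that is irrelevant here: the corollary concerns genuinely transitive actions, where the generic stabilizer is all of $H$ and the dimension argument above suffices.
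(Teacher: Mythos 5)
Your proof is correct and is exactly the paper's intended argument: the corollary is stated without proof immediately after the Proposition, and the specialization you spell out (the stabilizer $H$ is zero-dimensional, hence finite, hence $\mathfrak{X}(H)$ is finite, so the Proposition applies with $G=\mathrm{SL}(2)$) is the only step needed. Your extra remark on why $H$ is finite is a reasonable, harmless elaboration.
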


Now suppose that the $\mathrm{SL}(2)$-action on a
three-dimensional variety $X$ is locally transitive but is not
transitive. Then $X$ is an
$\mathrm{SL}(2)/\mathbb{Z}_r$-embedding. Every
$\SL(2)/\mathbb{Z}_r$-embedding is uniquely determined by its
height $h$. Let us calculate the heights of the toric
$\SL(2)/\mathbb{Z}_r$-embeddings. Let us consider a toric
$\SL(2)/\mathbb{Z}_r$-embedding $X$. A toric variety $X$
corresponds to a cone $\sigma$ in the space $N_{\mathbb{Q}}.$

\begin{lem}
The number $u$ of one-dimensional edges in $\sigma$ is equal to
$4$.

\end{lem}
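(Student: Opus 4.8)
The plan is to use the characterization of toric $\SL(2)/\mathbb{Z}_r$-embeddings as categorical quotients $V/\!\!/\widehat{T}$ provided by Theorem~\ref{GH}, and then count one-dimensional edges on both sides. First I would invoke Theorem~\ref{GH} (applicable since $\mathfrak{X}(\mathbb{Z}_r)$ is finite and $\SL(2)/\mathbb{Z}_r \hookrightarrow X$ is an open equivariant embedding): the toric variety $X$ satisfies $X \stackrel{\SL(2)}{\cong} V/\!\!/\widehat{T}$ for some $(\SL(2)\times\widehat{T})$-module $V$. By Remark~\ref{zam}, the free rank $n$ of $\mathrm{C}\mathrm{l}(X)$ equals the number of prime divisors in the complement of the open orbit $\SL(2)/\mathbb{Z}_r$, which by the proposition quoted from \cite{p1} is exactly $1$; and $s$, the number of torsion cyclic summands, is at most the number of cyclic factors of $\mathfrak{X}(\mathbb{Z}_r)\cong\mathbb{Z}_r$, hence $s\le 1$. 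So $\mathrm{C}\mathrm{l}(X)\cong\mathbb{Z}\oplus(\text{at most one finite cyclic factor})$, which matches the stated $\mathbb{Z}\oplus\mathbb{Z}_l$ from \cite{pan}.

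Next I would translate the edge count into the structure of the Cox ring. For an affine toric variety $X$ with $\kk[X]^\times=\kk^\times$ (which holds here by Lemma~\ref{const}, since $\SL(2)$ acts locally transitively), the Cox ring is a polynomial ring in $u$ variables, one per one-dimensional face of $\sigma$, and it is isomorphic to $S(X)$ by Lemma~\ref{isom}/\ref{S_Cox}. The grading group $\mathrm{C}\mathrm{l}(X)$ of this polynomial ring is generated by the $u$ degrees of the variables, and the kernel of the surjection $\mathbb{Z}^u \twoheadrightarrow \mathrm{C}\mathrm{l}(X)$ is the lattice $M$ of rank $\dim X = 3$ (coming from the exact sequence $0\to M\to \mathbb{Z}^{\Delta(1)}\to \mathrm{C}\mathrm{l}(X)\to 0$ of toric geometry). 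Since $\mathrm{C}\mathrm{l}(X)$ has free rank $1$, counting ranks gives $u = \mathrm{rk}\,M + \mathrm{rk}\,\mathrm{C}\mathrm{l}(X) = 3 + 1 = 4$. This is the whole computation.

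The main obstacle — really the only subtle point — is justifying the exact sequence $0\to M\to\mathbb{Z}^{\Delta(1)}\to\mathrm{C}\mathrm{l}(X)\to0$ in the affine setting and making sure the map $m\mapsto(\langle m,n_\rho\rangle)_\rho$ is injective, i.e.\ that the $n_\rho$ span $N_{\mathbb{Q}}$. Injectivity holds precisely because $\sigma$ is strongly convex of full dimension $3$: a strongly convex cone of maximal dimension has its rays spanning $N_{\mathbb{Q}}$ (otherwise the cone would lie in a proper subspace). And $\sigma$ has full dimension $3$ because $X$ is three-dimensional and $\kk[X]^\times=\kk^\times$ forces the dual cone $\widehat\sigma$ to be strongly convex, equivalently $\sigma$ to be full-dimensional. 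So I would first note that $\sigma$ is a full-dimensional strongly convex cone in $N_{\mathbb{Q}}\cong\mathbb{Q}^3$, deduce that its rays span $N_{\mathbb{Q}}$, conclude the displayed sequence is exact, and read off $u=4$ by comparing ranks — using only that the free rank of $\mathrm{C}\mathrm{l}(X)$ is $1$, which was already established via Remark~\ref{zam} and the uniqueness of the boundary divisor.
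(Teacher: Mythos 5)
Your proof is correct and follows essentially the same route as the paper: both compute $u$ from the rank count in the sequence $M\to\mathbb{Z}^{\Delta(1)}\to \mathrm{Cl}(X)\to 0$, using that the free rank of $\mathrm{Cl}(X)$ is $1$ and that $M$ has rank $3$, to get $u-3=1$. The only difference is that you explicitly justify the injectivity of $M\to\mathbb{Z}^{\Delta(1)}$ (via $\kk[X]^\times=\kk^\times$ forcing $\sigma$ to be full-dimensional), a point the paper passes over silently.
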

\begin{proof}
By Lemma \ref{Tinv}, any divisor in $X$ is equivalent to a
$T$-invariant one. Any $T$-invariant divisor in $X$ can be
written as $\sum a_\rho D_\rho.$ Hence, the rank of the group of
$T$-invariant Weil divisors in $X$ is $u$. A $T$-invariant
divisor is principal if and only if it can be written as
$\sum\langle m,n_\rho\rangle D_\rho$ for some $m\in M$. The
lattice $M$ has dimension three. Hence the rank of the group of
principal $T$-invariant divisors is three. Therefore the rank of
the divisor class group is $u-3=1$, that is, $u=4.$
\end{proof}

Recall that there exists a vector space $V\cong
\mathbb{K}^u=\kk^4$ and a linear $(\mathrm{SL}_2\times
\widehat{T})$-action on $V$ such that $X\cong V/\!\!/\widehat{T}$.
By Remark \ref{zam}, we have $\widehat{T}=\widetilde{T}\times
\mathbb{Z}_l$, where $\widetilde{T}$ is a one-dimensional torus.
Suppose $V=V_1\oplus\ldots\oplus V_n$, where $V_i$ are the weight
spaces of $\widetilde{T}$-action. Then each $V_i$ is an
$\mathrm{SL}(2)$-module.

Let us consider the space $R_s=\langle x^s,x^{s-1}y,\ldots
,y^s\rangle$ of binary forms of degree $s$. The group
$\mathrm{SL}(2)$ acts on $R_s$ by the following rule. If
$g=\begin{pmatrix} a&b\\c&d\end{pmatrix}$, then $g\cdot x=dx-by$;
$g\cdot y=-cx+ay$. Then $R_s$ is an irreducible
$\mathrm{SL}_2$-module of dimension $s+1$. Any irredusible
$(s+1)$-dimensional $\mathrm{SL}_2$-module is isomorphic to $R_s$.

There are 5 cases of splitting of $V$ into irreducible
$\mathrm{SL}_2$-modules.

\noindent1) $V=R_0\oplus R_0\oplus R_0\oplus R_0$, \ 2) $V=
R_0\oplus R_0\oplus R_1$, \ 3) $V= R_0\oplus R_2$, \ 4) $V=
R_1\oplus R_1$, 5) $V=R_3.$

In cases 1), 2) and 3) all orbits in $V$ have dimension less then
three. Hence, all orbits in $V/\!\!/\widehat{T}$ have dimension
less then three.

In case 5) $\widetilde{T}$ acts on $V$ by homotatis. Hence
$V/\!\!/\widetilde{T}$ is a point. Thus case 5) is impossible.

Let us consider case $4)\colon V= R_1\oplus R_1$. The variety
$V/\!\!/\widetilde{T}$ can be three-dimensional only if the
weights of the $\widetilde{T}$-action have opposite signs.

We have

$$V=\mathbb{K}^2\oplus \mathbb{K}^2=V_1\oplus V_2;$$ $$
\widetilde{T} :V_1,\qquad t\cdot v_1=t^{np}v_1,\ p\in\mathbb{N},\
v_1\in V_1;
$$$$
\widetilde{T} :V_2,\qquad t\cdot v_2=t^{-nq}v_2,\ q\in\mathbb{N},\
v_2\in V_2,\  n\in \mathbb{N},(p,q)=1.
$$
Let us introduce the following notation.
$$ Z=V/\!\!/\widetilde{T};$$$$Y=\{\underbrace{v_1\otimes\ldots\otimes v_1}_q\otimes
\underbrace{v_2\otimes\ldots\otimes v_2}_p\}\subset V_1^{\otimes
q}\otimes V_2^{\otimes p}.$$
\begin{prop}There exists an
$\mathrm{SL}_2$-equivariant isomorphism between the varieties $Y$
and $Z$.
\end{prop}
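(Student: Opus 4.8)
The plan is to exhibit an explicit $\mathrm{SL}_2$-equivariant isomorphism between $Z = V/\!\!/\widetilde{T}$ and $Y$ by identifying $\kk[Z]$ with $\kk[Y]$. First I would observe that $\kk[Z] = \kk[V]^{\widetilde{T}}$ is, by the description of the $\widetilde{T}$-weights, the span of all monomials in $\kk[V_1]\otimes\kk[V_2] = \kk[v_1^{(1)},v_1^{(2)},v_2^{(1)},v_2^{(2)}]$ whose $V_1$-degree times $np$ equals their $V_2$-degree times $nq$; since $(p,q)=1$, this forces the $V_1$-degree to be a multiple of $q$ and the $V_2$-degree the corresponding multiple of $p$. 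Hence $\kk[Z]$ is generated by the bidegree-$(q,p)$ part, i.e.\ by the image of $\mathrm{Sym}^q(V_1^*)\otimes\mathrm{Sym}^p(V_2^*)$ inside $\kk[V]$.

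Next I would set up the cone $Y$ as the affine cone over the Segre–Veronese-type image of the map $V_1\times V_2 \to V_1^{\otimes q}\otimes V_2^{\otimes p}$ sending $(v_1,v_2)$ to $v_1^{\otimes q}\otimes v_2^{\otimes p}$; concretely $Y$ is the $\mathrm{SL}_2$-stable closed subvariety whose coordinate ring is the quotient of $\mathrm{Sym}\big((V_1^{\otimes q}\otimes V_2^{\otimes p})^*\big)$ by the ideal of the image. The key identification is then: the $V_1^{\otimes q}$-factor, restricted to the diagonal $\{v_1^{\otimes q}\}$, is exactly $\mathrm{Sym}^q(V_1)$, and likewise for the second factor, so $\kk[Y]$ is the subalgebra of $\kk[V_1\oplus V_2]$ generated by $\mathrm{Sym}^q(V_1^*)\otimes\mathrm{Sym}^p(V_2^*)$ — the very same subalgebra as $\kk[Z]$. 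Both inclusions into $\kk[V]$ are $\mathrm{SL}_2$-equivariant (the torus weights are $\mathrm{SL}_2$-invariant, and the diagonal embedding is $\mathrm{SL}_2$-equivariant by functoriality of tensor powers), so the resulting isomorphism $\kk[Z]\cong\kk[Y]$ is $\mathrm{SL}_2$-equivariant, giving the desired isomorphism of varieties.

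The main obstacle I anticipate is bookkeeping rather than conceptual: one must check that the two subalgebras of $\kk[V]$ genuinely coincide and not merely that each is a quotient of $\mathrm{Sym}^q(V_1^*)\otimes\mathrm{Sym}^p(V_2^*)$. For $Z$ this is immediate from the weight computation. For $Y$ one needs that $\kk[Y]$ is precisely the image of $\mathrm{Sym}(\mathrm{Sym}^q(V_1^*)\otimes\mathrm{Sym}^p(V_2^*))$ in $\kk[V_1\oplus V_2]$ under restriction to the diagonal locus — equivalently, that the closed embedding $Y\hookrightarrow V_1^{\otimes q}\otimes V_2^{\otimes p}$ factors through (and is isomorphic onto its image in) the product of Veronese cones, with reduced structure. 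Since $V_i\cong\kk^2$, all the relevant rings are domains and the restriction map $v_1\mapsto v_1^{\otimes q}$ identifies $\kk[v_1^{\otimes q}]$ with the $q$-th Veronese subring of $\kk[V_1]$, so no subtlety about nonreducedness arises. Once both coordinate rings are recognized as the same concrete subalgebra of $\kk[\alpha_1,\alpha_2,\beta_1,\beta_2]$, equivariance of the isomorphism is formal.
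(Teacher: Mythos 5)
Your proposal is correct and follows essentially the same route as the paper: the paper's (very terse) proof likewise fixes coordinates $x_1,x_2,y_1,y_2$ on $V_1\oplus V_2$ and identifies both $\kk[Z]=\kk[V]^{\widetilde{T}}$ and $\kk[Y]$ with the monomial subalgebra $\kk[x_1^mx_2^{q-m}y_1^ly_2^{p-l}\mid 0\le m\le q,\ 0\le l\le p]$, i.e.\ the subalgebra generated by the bidegree-$(q,p)$ part. Your write-up merely supplies the weight computation and the Veronese/Segre bookkeeping that the paper leaves implicit.
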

\begin{proof}
Let us fix the isomorphism of $\mathrm{SL}_2$-modules $V_1\cong
V_2$. Let us also fix the bases in $V_1$ and $V_2$ corresponding
to each other under the chosen isomorphism. Let $x_1,x_2$ be the
coordinates in $V_1$ corresponding to the chosen basis and
$y_1,y_2$ be the corresponding coordinates in $V_2$. Then
$$\mathbb{K}[Y]=\kk[Z]=\mathbb{K}[x_1^mx_2^{q-m}y_1^ly_2^{p-l}\mid
m=0,\ldots,q;\ l=0,\ldots,p].$$ Hence, $Y\stackrel{\SL(2)}\cong
Z$.

\end{proof}
In the sequel we shall not distinguish between $Z$ and $Y$ and
use the notation $Y$.

Let us consider the following $\mathrm{SL}_2$-orbit in $Y$
$$ \mathcal{O}=Orb(u),\ u= \underbrace{e_1\otimes\ldots\otimes
e_1}_q\otimes\underbrace{ e_2\otimes\ldots\otimes e_2}_p.$$ Here
$e_1\in V_1$ and $e_2\in V_2$ are not proportional after the
identification of $V_1$ and $V_2$ by the chosen isomorphism. The
stabilizer of $u$ is $St_u=\left\{\begin{pmatrix}
\varepsilon&0\\0&\varepsilon^{-1}\end{pmatrix},\varepsilon^{|q-p|}=1\right\}$.
If $p\neq q$, then $St_u\cong \mathbb{Z}_{q-p}$ is a finite
group. Hence $Orb(u)$ is three-dimensional and open. If $p=q$,
then the stabilizer of any point contains a one-dimensional torus.
Therefore there is no open orbit. In the sequel we shall consider
the case $q>p$. Then the toric variety $Y=V/\!\!/\widetilde{T}$
contains an open $\SL(2)$-orbit. Therefore, $Y$ is a toric
$\SL(2)/\mathbb{Z}_{q-p}$\,-\,embedding.

The group $\SL(2)$ acts on itself by left shifts. This action
induces the action on the homogeneous space $SL(2)/\mathbb{Z}_r$.
Let us consider the $\SL(2)$-equivariant dominant morphism
$$\varphi\colon \SL(2)\rightarrow Y;\ g \stackrel{\varphi}\mapsto
g\cdot  u.$$ It corresponds to the $\SL(2)$-equivariant embedding
$$\varphi^*:\mathbb{K}[Y]\hookrightarrow
\mathbb{K}[\SL(2)]=\mathbb{K}[\alpha,\beta,\gamma,\delta]/
(\alpha\delta-\beta\gamma-1).$$ Suppose $g=\begin{pmatrix}
a&b\\c&d \end{pmatrix} \in \SL(2)$. Then $g\cdot e_1=ae_1+ce_2,\
g\cdot e_2=be_1+de_2.$ Note that
$g\cdot\alpha=d\alpha-b\gamma,g\cdot\gamma=
a\gamma-c\alpha,g\cdot\beta=d\beta-b\delta,g\cdot\delta=
a\delta-c\beta.$ We have
\begin{multline*}
\varphi^*(x_1^mx_2^{q-m}y_1^ly_2^{p-l})(g)=
x_1^mx_2^{q-m}y_1^ly_2^{p-l}(\varphi(g))=\\
=x_1^mx_2^{q-m}y_1^ly_2^{p-l} (\underbrace{g\cdot
e_1\otimes\ldots\otimes g\cdot e_1}_q\otimes \underbrace{ g\cdot
e_2\otimes\ldots\otimes g\cdot e_2}_p)=\\=
x_1^mx_2^{q-m}y_1^ly_2^{p-l}
(\underbrace{(ae_1+ce_2)\otimes\ldots\otimes (ae_1+ce_2)}_q\otimes
\underbrace{ (be_1+de_2)\otimes\ldots\otimes (be_1+de_2)}_p)=\\=
a^mc^{q-m}b^ld^{p-l}=\alpha^m\gamma^{q-m}\beta^l\delta^{p-l}(g).
\end{multline*}
Thus $\varphi^*(x_1^mx_2^{q-m}y_1^ly_2^{p-l})=
\alpha^m\gamma^{q-m}\beta^l\delta^{p-l}$.

Let us
consider a unipotent subgroup $U=\left\{\begin{pmatrix} 1&0\\
*&1\end{pmatrix}\right\}$ in $\SL(2)$. The algebra of
$U$-invariant functions on $\mathrm{SL}(2)$ is
$\mathbb{K}[\SL(2)]^U=\mathbb{K}[\alpha,\beta]$. The restriction
of $\varphi^*$ to $\mathbb{K}[Y]^U$ is
$\varphi^*|_{\mathbb{K}[Y]^U}\colon\mathbb{K}[Y]^U\hookrightarrow
\mathbb{K}[\SL(2)]^U =\mathbb{K}[\alpha,\beta]$. Let us prove
that the height of $Y$ is equal to the maximal possible value of
the fraction $w/t$, where $\alpha^t\beta^w\in
\mathrm{Im}\,\varphi^*|_{\mathbb{K}[Y]^U}=\mathrm{Im}\,\varphi^*\cap
\mathbb{K}[\alpha,\beta]$. Let $W$ be the toric
$\SL(2)$-embedding such that $Y=W/\!\!/\mathbb{Z}_s$. Then we
have dominant morphisms
$$\SL(2)\stackrel{\psi}\rightarrow W\stackrel{\pi}\rightarrow
Y,$$where $\varphi=\pi\circ\psi$. Hence,
$$\mathbb{K}[Y]^U
\stackrel{\pi^*|_{\mathbb{K}[W]^U}}\hookrightarrow
\mathbb{K}[W]^U\stackrel{\psi^*|_{\mathbb{K}[\SL(2)]^U}}\hookrightarrow
\mathbb{K}[\alpha,\beta] .$$ If $\alpha^t\beta^w\in
\mathrm{Im}\,(\varphi^*)\cap \mathbb{K}[\alpha,\beta]$, then
$\alpha^t\beta^w\in \mathrm{Im}\,(\psi^*)\cap
\mathbb{K}[\alpha,\beta]$. Therefore, the height of $W$, which is
equal to the height of $Y$, is not less then $w/t$. If the height
of $W$ is equal to $h$, then there exists a monomial
$\alpha^\xi\beta^\eta$ in $\mathrm{Im}\,(\psi^*)\cap
\mathbb{K}[\alpha,\beta]$ such that $\eta/\xi=h$. Then
$\alpha^\xi\beta^\eta=\psi^*(f)$ for some $f\in \mathbb{K}[W]^U$.
Let
$\zeta=\begin{pmatrix}\varepsilon&&0\\0&&\varepsilon^{-1}\end{pmatrix}$
be a generator of $\mathbb{Z}_s$. Suppose
$$\widetilde{f}=f(\zeta\cdot f) (\zeta^2\cdot
f)\ldots(\zeta^{s-1}\cdot f)\in \mathbb{K}[Y]^U.$$ It is easy to
see that $\varphi^*(\widetilde{f})=\alpha^{s\xi}\beta^{s\eta}$.
But $\frac{s\eta}{s\xi}=\frac\eta\xi=h.$ Thus, the height of $Y$
is the maximal possible value of $w/t$.
\begin{prop}
If a monomial $\alpha^t\beta^w$ belongs to the image of
$\varphi^*$, then $w/t\leq p/q$.
\end{prop}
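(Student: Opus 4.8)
The plan is to analyze exactly which monomials $\alpha^t\beta^w$ lie in $\operatorname{Im}\varphi^*$. From the explicit computation $\varphi^*(x_1^mx_2^{q-m}y_1^ly_2^{p-l})=\alpha^m\gamma^{q-m}\beta^l\delta^{p-l}$, the image of $\varphi^*$ is spanned by monomials $\alpha^m\gamma^{q-m}\beta^l\delta^{p-l}$ with $0\le m\le q$, $0\le l\le p$, together with products thereof. So a general element of $\operatorname{Im}\varphi^*$ is a $\kk$-linear combination of products
$$\prod_{\nu}\alpha^{m_\nu}\gamma^{q-m_\nu}\beta^{l_\nu}\delta^{p-l_\nu},$$
and I want to understand when such a combination, after applying the relation $\alpha\delta-\beta\gamma=1$ in $\kk[\SL(2)]$, reduces to a pure monomial $\alpha^t\beta^w$ in $\alpha,\beta$ alone (i.e. with no $\gamma$ or $\delta$).

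First I would reduce to the case of a single generator-product, i.e. set up a grading argument: $\kk[\SL(2)]$ carries a $\mathbb{Z}$-grading (coming from the maximal torus of $\SL(2)$ acting by right translations, or equivalently the weight grading in which $\alpha,\beta$ have one weight and $\gamma,\delta$ the opposite) under which $\varphi^*$ is homogeneous, so the condition "$\alpha^t\beta^w\in\operatorname{Im}\varphi^*$" can be checked degree by degree; and since the generators $\alpha^m\gamma^{q-m}\beta^l\delta^{p-l}$ of the image all satisfy $\deg_\alpha+\deg_\gamma=q$ and $\deg_\beta+\deg_\delta=p$ (reading off the tensor-factor structure $V_1^{\otimes q}\otimes V_2^{\otimes p}$), any product of $k$ of them has $\alpha$-plus-$\gamma$ degree equal to $kq$ and $\beta$-plus-$\delta$ degree equal to $kp$. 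The point is that the $\widetilde T$-weight, or the total degree in the tensor variables, is preserved, so I can pin down $k$ from $t$ and $w$.

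The key step is then the following: suppose an element $P\in\operatorname{Im}\varphi^*$, homogeneous as above with "tensor degree" $k$, equals $\alpha^t\beta^w$ in $\kk[\SL(2)]=\kk[\alpha,\beta,\gamma,\delta]/(\alpha\delta-\beta\gamma-1)$. Lift $P$ to $\kk[\alpha,\beta,\gamma,\delta]$ as a combination of monomials $\alpha^{m}\gamma^{q k - m}\beta^{l}\delta^{p k - l}$ — note every monomial appearing has $\gamma$-exponent $qk-m$ and $\delta$-exponent $pk-l$ where $m=\sum m_\nu\le qk$ and $l=\sum l_\nu\le pk$. Passing to $\SL(2)$ and demanding the result be $\alpha^t\beta^w$ means all the $\gamma,\delta$ must cancel after substituting $\beta\gamma=\alpha\delta-1$; the cheapest way to see the constraint is to look at the term of top $\alpha$-degree (equivalently, substitute suitable numerical values): specializing $\gamma=0$ in $\kk[\SL(2)]$ gives $\alpha\delta=1$, i.e. $\delta=\alpha^{-1}$, and under $\gamma\mapsto 0$ each generator $\alpha^{m}\gamma^{qk-m}\beta^{l}\delta^{pk-l}$ dies unless $m=qk$, in which case it becomes $\alpha^{qk}\beta^{l}\alpha^{-(pk-l)}=\alpha^{qk-pk+l}\beta^{l}$. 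So $\alpha^t\beta^w|_{\gamma=0}=\alpha^t\beta^w$ must be a combination of $\alpha^{(q-p)k+l}\beta^l$ with $0\le l\le pk$; matching the $\beta$-exponent forces $w=l$ and hence $t=(q-p)k+w$, giving $w=t-(q-p)k\le t - (q-p)\frac{t+w}{q}$... more directly, from $t=(q-p)k+w$ and $w\le pk$ we get $w\le pk$ and $t+w=qk+... $hmm, rather: $t = (q-p)k + w \ge (q-p)k$ and $w\le pk$, so $w/t \le pk/((q-p)k) $ is not yet $p/q$; I need the sharper bound $t\ge qk-w$? Let me instead note $t=(q-p)k+w$ gives $t+ (p/ (q-p))\cdot$ — cleanest: $w\le pk$ and $t=(q-p)k+w$ so $qk = t - w + pk + ... $. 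Actually $t=(q-p)k+w\Rightarrow t-w=(q-p)k$ and $w\le pk=\frac{p}{q-p}(q-p)k=\frac{p}{q-p}(t-w)$, whence $w(q-p)\le p(t-w)$, i.e. $wq-wp\le pt-pw$, i.e. $wq\le pt$, that is $w/t\le p/q$. So the inequality drops out once the top-degree (i.e. $\gamma=0$) analysis pins the shape of the monomials.

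The main obstacle I anticipate is making the "set $\gamma=0$" step rigorous as an identity in $\kk[\SL(2)]$ rather than a loose heuristic: one must check that the evaluation homomorphism $\kk[\alpha,\beta,\gamma,\delta]/(\alpha\delta-\beta\gamma-1)\to\kk[\alpha^{\pm1},\beta]$, $\gamma\mapsto0$, $\delta\mapsto\alpha^{-1}$, is well defined (it is, since $\alpha\cdot\alpha^{-1}-\beta\cdot 0-1=0$), and that it is injective enough on the relevant graded pieces to conclude — or, to be safe, argue instead with the homomorphism $\alpha\mapsto s,\beta\mapsto s^{-1}a,\gamma\mapsto t,\delta\mapsto s^{-1}(1+at/... )$ parametrizing $\SL(2)$ and extract the exponent inequality by a leading-term argument in the free polynomial ring. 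A secondary technical point is the reduction from products of generators with varying $q$-tuples $(m_\nu)$ to the single aggregated exponent $m=\sum m_\nu$: this is immediate since in the commutative ring $\alpha^{m}\gamma^{qk-m}\beta^{l}\delta^{pk-l}$ only depends on $m=\sum m_\nu$ and $l=\sum l_\nu$, with the constraints $0\le m_\nu\le q$, $0\le l_\nu\le p$ merely implying $0\le m\le qk$, $0\le l\le pk$, which is all that is used.
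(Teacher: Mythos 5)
Your proof is correct and follows essentially the same route as the paper's: specialize $\gamma=0$ and $\delta=\alpha^{-1}$ (a well-defined homomorphism onto $\kk[\alpha^{\pm1},\beta]$, since it kills $\alpha\delta-\beta\gamma-1$), observe that only the terms with full $\alpha$-exponent survive, and compare exponents to get $wq\le pt$. If anything you are more careful than the paper, which writes the argument only for linear combinations of the generators $x_1^mx_2^{q-m}y_1^ly_2^{p-l}$ rather than of arbitrary products of them; your bookkeeping with the degree $k$ closes that small gap.
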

\begin{proof}
Assume that $\varphi^*(\sum
z_i{x_1}^{m_i}{x_2}^{q-m_i}{y_1}^{l_i}{y_2}^{p-l_i})=\alpha^t\beta^w,
z_i\in \mathbb{K}.$ Then $$\sum
z_i\alpha^{m_i}\gamma^{q-m_i}\beta^{l_i}\delta^{p-l_i}=
\alpha^t\beta^w+(\alpha\delta-\beta\gamma-1)
F(\alpha,\beta,\gamma,\delta).$$ (This equality holds in
$\mathbb{K}[\alpha,\beta,\gamma,\delta]$.) Putting $\gamma=0$ we
obtain
$$\sum
z_i\alpha^{q}\beta^{l_i}\delta^{p-l_i}=\alpha^t\beta^w+(\alpha\delta-1)
\widetilde{F}(\alpha,\beta,\gamma).$$
Substituting $1/\alpha$ for $\delta$ we obtain
$$\sum z_i\alpha^{q-p+l_i}\beta^{l_i}=\alpha^t\beta^w.$$
Therefore,
$$ t=q-p+l_i;\ w=l_i;\
w/t=l_i/(q-p+l_i)\leq p/q .$$
\end{proof}
For the monomial $\alpha^q\beta^p=\varphi^*(x_1^qy_1^p)$, we have
$w/t=p/q$. Hence the height of $Y$ is equal to $p/q$. But, by
definition, the height of $Y$ is equal to the height of $W$. And
it is equal to the height of $X$. Therefore the height of $X$ is
equal to $p/q$. Recall that $Y=V/\!\!/\widetilde{T}$ is an
$\SL(2)/\mathbb{Z}_{q-p}$-embedding, and
$X=(V/\!\!/\widetilde{T})/\!\!/\mathbb{Z}_l=Y/\!\!/\mathbb{Z}_l$.
Hence the order of the stabilizer of a point in the open
$\SL(2)$-orbit in $X$ is divisible by $q-p$.

Let us check that an $\SL(2)/\mathbb{Z}_{(q-p)l}$-embedding of
height $p/q$ is toric for every positive integer $l$. Indeed,
$Y=\mathbb{K}^4/\!\!/\widetilde{T}$ is a toric variety with the
height $p/q$, that is, $Y$ corresponds to $l=1$. Let us consider
an $\SL(2)/\mathbb{Z}_{(q-p)l}$-embedding with height $p/q$. We
have
$X=W/\!\!/\mathbb{Z}_{(q-p)l}=(W/\!\!/\mathbb{Z}_{(q-p)})/\!\!/
\mathbb{Z}_{l}=Y/\!\!/\mathbb{Z}_l.$ Suppose $\pi\colon
Y\rightarrow X$ is the factorisation morphism. The torus
$T=\overline{T}/\widetilde{T}$ acts on $Y$ with an open orbit and
trivial stabilizer of generic point. Let us define a $T$-action
on $X$ by $t\cdot \pi(y)=\pi(t\cdot y)$. This action is well
defined if the actions of $T$ and $\mathbb{Z}_l$ on $Y$ commute.
Let us check that the actions of $T$ and $\mathbb{Z}_l=
\mathbb{Z}_{(q-p)l}/\mathbb{Z}_{q-p}$ on $\mathbb{K}[Y]$ commute,
where $\mathbb{Z}_{(q-p)}\subset \SL(2)$ and
$\mathbb{Z}_{(q-p)l}\subset \SL(2)$. Suppose
$$
t=\overline{t}\widetilde{T}\in T,\
\overline{t}=\begin{pmatrix}a&&0&&0&&0\\0&&b&&0&&0\\0&&0&&c&&0\\0&&0&&0&&d\end{pmatrix}
\in \overline{T},
$$
$$
\overline{\zeta}=\begin{pmatrix}\varepsilon&&0\\0&&\varepsilon^{-1}
\end{pmatrix}\in\mathbb{Z}_{l(q-p)},\ \zeta=\overline{\zeta}
\mathbb{Z}_{q-p}\in \mathbb{Z}_l.
$$
Here $\varepsilon$ is a $(q-p)l$-th root of unity. Then

$$t\cdot
x_1^mx_2^{q-m}y_1^ny_2^{p-n}=a^{-m}b^{m-q}c^{-n}d^{n-p}
x_1^mx_2^{q-m}y_1^ny_2^{p-n}.$$ Recall that
$$Y=\{v_1\otimes\ldots\otimes v_1\otimes v_2\otimes\ldots\otimes
v_2\}\subset V_1^{\otimes q}\otimes V_2^{\otimes p}.$$ The open
$\SL(2)$-orbit is $\{g\cdot e_1\otimes\ldots\otimes g\cdot
e_1\otimes g\cdot e_2\otimes\ldots\otimes g\cdot e_2\}$, where
$(e_1,e_2)$ is a fixed basis in $\mathbb{K}^2$. Suppose
$v=g_0\cdot e_1\otimes\ldots\otimes g_0\cdot e_1\otimes g_0\cdot
e_2\otimes\ldots\otimes g_0\cdot e_2.$ Then
\begin{multline*}\!\!\!\!\!\!\!\zeta\cdot v=g_0\overline{\zeta}
\cdot e_1\otimes\ldots\otimes g_0\overline{\zeta}\cdot e_1\otimes
g_0\overline{\zeta}\cdot e_2 \otimes\ldots\otimes
g_0\overline{\zeta}\cdot e_2=\\=\varepsilon g_0\cdot
e_1\otimes\ldots\otimes\varepsilon g_0\cdot
e_1\otimes\varepsilon^{-1} g_0\cdot
e_2\otimes\ldots\otimes\varepsilon^{-1} g_0\cdot
e_2=\\=\varepsilon^{q-p}g_0\cdot e_1\otimes\ldots\otimes g_0\cdot
e_1\otimes g_0\cdot e_2\otimes\ldots\otimes g_0\cdot
e_2=\varepsilon^{q-p}v.\end{multline*} Hence, $\zeta\cdot
x_1^mx_2^{q-m}y_1^ny_2^{p-n}(v)=x_1^mx_2^{q-m}y_1^ny_2^{p-n}(\zeta^{-1}
\cdot v)=\varepsilon^{p-q}x_1^mx_2^{q-m}y_1^ny_2^{p-n}(v).$ This
implies that the actions of the torus and $\mathbb{Z}_l$ commute.
Thus, $X$ is a toric $\SL(2)/\mathbb{Z}_{(q-p)l}$-embedding.

\noindent Let us formulate the main result.
\begin{theor}
Let $X$ be an irreducible three-dimensional normal affine
variety. Assume that there exists a locally transitive
$\mathrm{SL}(2)$-action on $X$. Then $X$ is toric if and only if
$X$ is an $\mathrm{SL}(2)/\mathbb{Z}_r$-embedding of height $\frac
pq$, where $(p,q)=1$ and $r$ is divisible by $q-p.$
\end{theor}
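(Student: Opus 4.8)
The plan is to obtain the theorem purely by assembling the material of this section; no new construction is required. I organize the proof around the explicit family $Y=\mathbb{K}^4/\!\!/\widetilde T$ of case 4) and treat the two implications of the equivalence separately.

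For the ``if'' direction, suppose $X$ is an $\mathrm{SL}(2)/\mathbb{Z}_r$-embedding of height $\frac pq$ with $(p,q)=1$ and $q-p$ dividing $r$. Since $r\in\mathbb{N}$, divisibility by $q-p$ forces $q-p>0$, i.e.\ $q>p$, so write $r=(q-p)l$ with $l\in\mathbb{N}$. By the analysis of case 4) carried out above, the variety $Y=\mathbb{K}^4/\!\!/\widetilde T$, with $\widetilde T$ acting on $R_1\oplus R_1$ with weights $np$ and $-nq$, is a toric $\mathrm{SL}(2)/\mathbb{Z}_{q-p}$-embedding of height $\frac pq$; and the $\mathrm{SL}(2)/\mathbb{Z}_r$-embedding of height $\frac pq$ is then $X\cong W/\!\!/\mathbb{Z}_r=(W/\!\!/\mathbb{Z}_{q-p})/\!\!/\mathbb{Z}_l\cong Y/\!\!/\mathbb{Z}_l$, where $\mathbb{Z}_l=\mathbb{Z}_{(q-p)l}/\mathbb{Z}_{q-p}$. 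It was verified just before the statement that the torus $T=\overline T/\widetilde T$ acting on $Y$ commutes with this $\mathbb{Z}_l$-action, hence descends to $X$ with an open orbit, so $X$ is toric.

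For the ``only if'' direction, suppose $X$ is toric and carries a locally transitive $\mathrm{SL}(2)$-action. If that action were transitive, Corollary \ref{new} would contradict toricity; hence the action is locally transitive but not transitive, so by Popov's classification $X$ is an $\mathrm{SL}(2)/\mathbb{Z}_r$-embedding. Applying Theorem \ref{GH} with $G=\mathrm{SL}(2)$ and $H=\mathbb{Z}_r$ gives $X\stackrel{G}{\cong}V/\!\!/\widehat T$ for a quasitorus $\widehat T$ and an $(\mathrm{SL}(2)\times\widehat T)$-module $V$; the edge count yields $V\cong\mathbb{K}^4$, and by Remark \ref{zam} (as $\mathfrak X(\mathbb{Z}_r)$ is cyclic) one has $\widehat T=\widetilde T\times\mathbb{Z}_l$ with $\widetilde T$ one-dimensional. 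Running through the five possible decompositions of $V$ into irreducible $\mathrm{SL}(2)$-modules: cases 1)--3) produce only orbits of dimension at most two, and case 5) collapses the $\widetilde T$-quotient to a point, so the only possibility yielding a three-dimensional toric $\mathrm{SL}(2)$-variety is case 4), $V=R_1\oplus R_1$ with weights of opposite sign. The analysis of that case identifies $Y=V/\!\!/\widetilde T$ as a toric $\mathrm{SL}(2)/\mathbb{Z}_{q-p}$-embedding of height $\frac pq$ with $q>p$, whence $X\cong Y/\!\!/\mathbb{Z}_l$ is an $\mathrm{SL}(2)/\mathbb{Z}_{(q-p)l}$-embedding of height $\frac pq$. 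In particular $r=(q-p)l$ is divisible by $q-p$, which closes the argument.

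Since Theorem \ref{GH}, the count $u=4$, the case-by-case analysis of the $\mathrm{SL}(2)$-module structure of $V$, the height computation via $\varphi^*$ (using the substitutions $\gamma=0$ and $\delta=1/\alpha$), and the descent of the torus action to $X\cong Y/\!\!/\mathbb{Z}_l$ are all already established, the theorem itself is essentially bookkeeping; the main work was spent earlier. The one point needing care is the degenerate boundary value $p=q=1$ (height $1$): there $q-p=0$, the divisibility condition cannot be satisfied by any $r\in\mathbb{N}$, and one must check this is consistent --- it is, because every height-$1$ embedding is homogeneous and hence excluded by Corollary \ref{new}, and dually the case-4) analysis guarantees that a toric $X$ automatically lies in the regime $q>p$.
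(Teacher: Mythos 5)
Your proof follows the paper's argument step for step --- Corollary~\ref{new} to rule out the transitive case, Theorem~\ref{GH} together with the edge count $u=4$ and Remark~\ref{zam}, the five-case analysis of $V$ as an $\mathrm{SL}(2)$-module isolating $V=R_1\oplus R_1$ with weights of opposite sign, the height computation via $\varphi^*$, and the descent of the torus from $Y$ to $Y/\!\!/\mathbb{Z}_l$ --- so it is essentially the paper's own proof and is correct in all essential respects. One factual correction to your closing aside: a height-$1$ $\mathrm{SL}(2)/\mathbb{Z}_r$-embedding is \emph{not} a homogeneous space; by the classification recalled in Section~6 its boundary contains a prime divisor, so it is not excluded by Corollary~\ref{new}. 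The exclusion of height $1$ from the toric list comes instead from the point you state second: the case-4) analysis forces $q>p$ for a toric $X$ (when $p=q$ every stabilizer contains a one-dimensional torus and there is no open orbit), so a toric embedding has height $p/q<1$, while on the other side $q-p=0$ divides no $r\in\mathbb{N}$; your main argument uses only this correct clause, so the error does not propagate.
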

\begin{corollary}
An $\SL(2)$-embedding is toric if and only if its height can be
written as $p/(p+1).$
\end{corollary}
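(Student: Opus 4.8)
The plan is to read this off from the preceding Theorem by specializing to degree $r=1$. By definition an affine $\SL(2)$-embedding is a three-dimensional normal affine irreducible variety carrying a locally transitive and locally free $\SL(2)$-action; local freeness means the stabilizer of a point in the open orbit is trivial, so such a variety is precisely an $\SL(2)/\mathbb{Z}_r$-embedding with $r=1$, and by Popov's classification recalled above it is determined by its height $\frac pq\in\mathbb{Q}\cap(0,1]$ with $(p,q)=1$.

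Next I would apply the Theorem with $r=1$: the embedding is toric if and only if $q-p$ divides $1$. Since $0<\frac pq\le 1$ forces $1\le p\le q$, we have $q-p\ge 0$, and $0$ does not divide $1$; hence the divisibility condition holds exactly when $q-p=1$, that is, $q=p+1$. Therefore $X$ is toric if and only if its height equals $\frac{p}{p+1}$ for some positive integer $p$. The converse direction requires no extra argument: if the height is $\frac{p}{p+1}$ then, this fraction being automatically reduced, $q-p=1$ divides $r=1$, and the Theorem gives toricity.

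There is essentially no obstacle here, the corollary being just the Theorem evaluated at degree $1$. The only points to keep straight are that an affine $\SL(2)$-embedding is exactly the degree-$1$ case of an $\SL(2)/\mathbb{Z}_r$-embedding, and the elementary arithmetic that among the admissible values $q-p\ge 0$ the divisibility of $r=1$ singles out $q-p=1$; in particular the degenerate height-$1$ case $p=q=1$, where $q-p=0$, is correctly excluded, and $p\ge 1$ guarantees that $\frac{p}{p+1}$ indeed lies in $(0,1)$ and so is an admissible height.
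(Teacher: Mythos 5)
Your proof is correct and matches the paper's (implicit) argument: the corollary is exactly the Theorem specialized to the locally free case $r=1$, where divisibility of $1$ by $q-p\ge 0$ forces $q-p=1$. You also correctly handle the only delicate point, namely excluding the height-$1$ case $p=q$ since $0$ does not divide $1$.
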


\section {The cone of a toric $\SL(2)$-embedding}

\label{cone}As it was mentioned in Section \ref{tor} any toric
variety $X$ corresponds to a polyhedral cone in the space
$N_{\mathbb{Q}}\cong\mathbb{Q}^{\dim X}$. Let us describe the
cone corresponding to an $\SL(2)/\mathbb{Z}_ {(q-p)l}$-embedding
$X$ of height $p/q$.

In the previous section we gave an explicit construction of $X$ as
the quotient of a toric $\SL(2)/\mathbb{Z}_{(q-p)}$-embedding $Y$
by the action of the group $\mathbb{Z}_l$. The algebra of regular
functions on $Y$ is $\mathbb{K}[x_1^mx_2^{q-m}y_1^ny_2^{p-n}]$,
$0\leq m\leq q$, $0\leq n\leq p$. Moreover, we obtained an
explicit formula for the action of $\mathbb{Z}_l$ on
$\mathbb{K}[Y]$. The action of the generator $\zeta$ of
$\mathbb{Z}_l$ on $\mathbb{K}[Y]$ is given by $\zeta\cdot
x_1^mx_2^{q-m}y_1^ny_2^{p-n}=
\varepsilon^{p-q}x_1^mx_2^{q-m}y_1^ny_2^{p-n},$ where
$\varepsilon$ is an $l(q-p)$-th root of unity. Hence the algebra
of invariants is
$$\mathbb{K}[X]=\mathbb{K}[Y]^{\mathbb{Z}_l}=
\mathbb{K}[x_1^ux_2^{lq-u}y_1^vy_2^{lp-v}].$$

Let us embed $\mathbb{K}[X]$ in the algebra of polynomials in
three variables. Suppose
$f=x_2/x_1,g=y_2/y_1,h={x_1}^{lq}{y_1}^{lp}$  in $\mathbb{K}(X)$.
It is clear that $f$, $g$ and $h$ are algebraically independent.
Then ${x_1}^{lq-u}{x_2}^m{y_1}^{lp-v}{y_2}^{v}=f^{u}g^{v}h$.
There is a natural embedding $\mathbb{K}[f,g,h]\hookrightarrow
\mathbb{K}[f,g,h,f^{-1},g^{-1},h^{-1}].$ It corresponds to the
embedding of the torus
$T=\mathrm{Spec}\,\mathbb{K}[f,g,h,f^{-1},g^{-1},h^{-1}]$ into the
three-dimensional affine space
$V=\mathrm{Spec}\,\mathbb{K}[f,g,h].$ Recall that $M$ is the
lattice of characters of $T$. Then $M$ is spanned by the vectors
$a=(1,0,0),b=(0,1,0)$ and $c=(0,0,1)$. Let $N$ be the dual
lattice. Then $\mathbb{K}[f,g,h]$ is the semigroup algebra  of
the semigroup $P$ generated by $a,b$ and $c$. Let us define an
isomorphism $i\colon \mathbb{K}[f,g,h]\rightarrow \mathbb{K}[P]$
by $i(f)=a,\ i(g)=b,\ i(h)=c.$ Then $i(f^{u}g^{v}h)=ua+vb+c.$

Denote by $\sigma$ the cone corresponding to $X$ and by
$\widehat{\sigma}\,$ the cone dual to $\sigma$. The cone
$\widehat{\sigma}$ is spanned by the vectors $$ua+vb+c,
u\in\mathbb{Z}\cap[0,lq],v\in \mathbb{Z}\cap[0,lp].$$ Then the
cone $\sigma$ consists of all vectors $w\in N$ such that $\langle
w,ua+vb+c\rangle\geq 0.$ A linear function accepts the minimal
value at an end of an interval. Hence the cone $\sigma$ can be
given by the following
inequalities:$$(w,c)\geq0,(w,lqa+c)\geq0,(w,lpb+c)\geq0,(w,lqa+lpb+c)\geq0.$$
To find edges of the cone $\sigma$ we should choose two of the
above four inequations, replace them by the corresponding
equations, and solve the obtained system of equations. Besides,
we need to choose only those solutions that belong to $\sigma$.
We have six systems.

\begin{trivlist}{}{}
\item{1)}
$$\left\{\begin{aligned}
t= 0;\\
rlq+t= 0.
\end{aligned}
\right.\qquad \parbox {8cm}{Hence $r=0,t=0$. The answer is
(0,1,0).}$$

\item{2)}
$$\left\{\begin{aligned}
t= 0;\\
slp+t= 0.
\end{aligned}
\right.\qquad\parbox{8cm}{Therefore $s=0,t=0$. The answer is
(1,0,0).}$$

\item{3)}
$$\left\{\begin{aligned}
t= 0;\\
rlq+slp+t= 0.
\end{aligned}
\right.\qquad \parbox{8cm}{Hence $t=0, rlq+slp=0$. Since
$rlq\geq0$ and $slp\geq0$, we obtain $rlq=slp=0$, that is,
$r=s=t=0$.}$$

\item{4)}
$$\left\{\begin{aligned}
rlq+t= 0;\\
slp+t= 0.
\end{aligned}
\right.\qquad\parbox{8cm}{Therefore $0\leq rlq+slp+t=-t\leq0$.
Hence $r=s=t=0$.}$$

\item{5)}
$$\left\{\begin{aligned}
rlq+t= 0;\\
rlq+slp+t= 0.
\end{aligned}
\right.\qquad \parbox{8cm} {Therefore $s=0$. Since for vector
$(1,0,-lq)$ we obtain $slp+t<0$, the answer is $(-1,0,lq)$.}$$

\item{6)}
$$\left\{\begin{aligned}
slp+t= 0;\\
rlq+slp+t= 0.
\end{aligned}
\right.\qquad \parbox{8cm}{Hence $r=0$. The answer is
$(0,-1,lp)$.}$$
\end{trivlist}

We obtain four 1-dimensional faces: $\rho_1=\mathbb{Q}_+(1,0,0)$,
$\rho_2=\mathbb{Q}_+(0,1,0)$, $\rho_3=\mathbb{Q}_+(-1,0,lq)$ and
$\rho_4=\mathbb{Q}_+(0,-1,lp)$. Let us formulate the result.
\begin{prop}
The cone corresponding to an
$\SL(2)/\mathbb{Z}_{l(q-p)}$-embedding of height $p/q$ regarded as
a toric variety is $$\mathrm{cone}((1,0,0), (0,1,0),
(-1,0,lq),(0,-1,lp)).$$
\end{prop}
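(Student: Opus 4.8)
The plan is to compute the dual cone $\widehat{\sigma}$ directly from the semigroup algebra $\kk[X]$, and then dualize. First I would record the explicit description $\kk[X] = \kk[x_1^u x_2^{lq-u} y_1^v y_2^{lp-v}]$ with $0\le u\le lq$, $0\le v\le lp$, obtained in the previous section from $\kk[Y]^{\mathbb{Z}_l}$. To embed this in a polynomial ring in three variables, I would set $f = x_2/x_1$, $g = y_2/y_1$, $h = x_1^{lq}y_1^{lp}$ in $\kk(X)$; these are algebraically independent, and each generating monomial $x_1^{lq-u}x_2^u y_1^{lp-v}y_2^v$ equals $f^u g^v h$. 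Thus, identifying the character lattice $M$ with $\mathbb{Z}^3$ via the basis dual to $(f,g,h)$, the semigroup $M\cap\widehat{\sigma}$ is generated by the lattice points $ua + vb + c$ for $u\in[0,lq]$, $v\in[0,lp]$, where $a,b,c$ are the standard basis vectors. Since the extreme rays of the cone $\widehat{\sigma}$ spanned by this finite point set are exactly the four ``corner'' points $c$, $lqa+c$, $lpb+c$, $lqa+lpb+c$ (the others being convex combinations), $\widehat{\sigma} = \mathrm{cone}(c,\ lqa+c,\ lpb+c,\ lqa+lpb+c)$.

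Next I would dualize. Writing a generic vector of $N_{\mathbb{Q}}$ as $w = (r,s,t)$, the condition $w\in\sigma$ is $\langle w, ua+vb+c\rangle\ge 0$ for all $u\in[0,lq]$, $v\in[0,lp]$; since a linear function on a box attains its minimum at a vertex, this reduces to the four inequalities
$$t\ge 0,\quad rlq + t\ge 0,\quad slp + t\ge 0,\quad rlq + slp + t\ge 0.$$
To find the edges of $\sigma$ I would intersect pairs of the corresponding hyperplanes: of the $\binom{4}{2}=6$ systems, four are degenerate and force $w=0$ (for instance $\{rlq+t=0,\ slp+t=0\}$ gives $0\le rlq+slp+t = -t\le 0$), while the two remaining systems $\{rlq+t=0,\ rlq+slp+t=0\}$ and $\{slp+t=0,\ rlq+slp+t=0\}$ yield the rays through $(-1,0,lq)$ and $(0,-1,lp)$ respectively (after checking which sign of the free parameter keeps $w$ inside $\sigma$). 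The remaining two edges $(1,0,0)$ and $(0,1,0)$ come from the systems $\{t=0,\ rlq+t=0\}$ and $\{t=0,\ slp+t=0\}$. Collecting, $\sigma = \mathrm{cone}((1,0,0),(0,1,0),(-1,0,lq),(0,-1,lp))$, which is the claimed answer.

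The only real subtlety is to be sure one has identified the \emph{extreme} rays correctly in both directions: first that the finite generating set of $M\cap\widehat{\sigma}$ really has just four extreme points among the $(lq+1)(lp+1)$ lattice points (clear, since $ua+vb+c$ lies on the segment between $c$ and $lqa+lpb+c$-type corners once one notes $u/lq$ and $v/lp$ are the only relevant parameters), and second, when solving each $2\times 2$ system, to discard the solution ray that leaves $\sigma$ and keep the one satisfying all four inequalities. Both are short finite checks; there is no genuine obstacle, and the computation of the six systems is exactly the routine case analysis carried out above.
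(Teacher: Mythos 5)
Your proposal is correct and follows essentially the same route as the paper: the same embedding via $f=x_2/x_1$, $g=y_2/y_1$, $h=x_1^{lq}y_1^{lp}$, the same reduction of the dual description to the four ``corner'' inequalities, and the same case analysis of the six $2\times 2$ systems to extract the extreme rays. No substantive differences to report.
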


\section{Final remarks}
Recently the question on degeneracy of an algebraic variety to a
toric one has been actively studied (see, for example, \cite{AB}).
In the case of an affine $\mathrm{SL}(2)/\mathbb{Z}_r$-embedding
the standard procedure of contraction of an action \cite{Stag}
gives us a toric variety. But if the starting variety was toric,
then the variety after contraction is never isomorphic to it.

Note that the result of this paper can be consider as the first
step towards describing the group of all (not equivariant)
automorphisms of an $\mathrm{SL}(2)/\mathbb{Z}_r$-embedding.
Indeed, the rank of this group has been calculated. For toric
$\mathrm{SL}(2)/\mathbb{Z}_r$-embeddings it is equal to 3. For
non-toric $\mathrm{SL}(2)/\mathbb{Z}_r$-embeddings it equals 2,
because a 2-dimensional torus acts on any
$\mathrm{SL}(2)/\mathbb{Z}_r$-embedding (see \cite[chapter 3, p.
4.8]{kr}).


\end{document}